\def\beq{\begin{equation}}
\def\eeq{\end{equation}}
\newtheorem{Theorem}{Theorem}[section]
\newtheorem{Remark}{Remark}[section]
\newtheorem{Lemma}{Lemma}[section]
\newtheorem{Definition}{Definition}[section]
\newtheorem{Proposition}{Proposition}[section]
\numberwithin{equation}{section}
\begin{document}
\setlength{\columnsep}{5pt}
\title{On linearization of biholomorphism with non-semi-simple linear part at a fixed point}

\author{Yue Mi$^\dag$}
\address{Universit\'e C\^ote d'Azur, CNRS, Laboratoire J. A. Dieudonn\'{e}, 06108 Nice, France}
\email{yue.mi@univ-cotedazur.fr}
\author{Laurent Stolovitch$^{\dag\dag}$}
\address{Universit\'e C\^ote d'Azur, CNRS, Laboratoire J. A. Dieudonn\'{e}, 06108 Nice, France}
\email{Laurent.stolovitch@univ-cotedazur.fr}
\thanks{$\dag$ Y.Mi was supported by China Scholarship Council(CSC) (No.202006190129).}
\thanks{${\dag\dag}$The research of L. Stolovitch has been supported by the French government through the UCAJEDI Investments in the Future project managed by the National Research Agency (ANR) with the reference number ANR-15-IDEX-01.}
\begin{abstract}
We prove the holomorphic linearizability of germs of biholomorphisms of $(\mathbb{C}^n,0)$, fixing the origin, point at which the linear part has nontrivial Jordan blocks under the following assumptions~: We first assume the eigenvalues are of modulus less or equal than $1$, and that they are non-resonant. We also assume that they satisfied not only a classical Diophantine condition but also new Diophantine-like conditions related to quasi-resonance phenomena.
\end{abstract}

\date{}
\maketitle
\section{Introduction and main results}
Let $F$ be a germ of biholomorphism of $(\mathbb{C}^n,0)$ fixing the origin and let $A=F'(0)$ be its linear part (Jacobian matrix) at the origin. Let $\operatorname{spec}(A)=\{\lambda_1,\lambda_2,\cdots,\lambda_n\}$ be eigenvalues of $A$. We define the set of {\it resonant multi-indices} of $\operatorname{spec}(A)$ as follows~:
\begin{equation*}
\begin{aligned}
\operatorname{Res}[\operatorname{spec}(A)] &= \bigcup_{j=1}^{n} \operatorname{Res}^j[\operatorname{spec}(A)],  \\
\operatorname{Res}^j[\operatorname{spec}(A)] &= \{\alpha \in \mathbb{Z}_+^n(2) :\lambda^{\alpha}-\lambda_j =0 \},
\end{aligned}
\end{equation*}
where $\mathbb{Z}_+^n(k)=\{\alpha \in \mathbb{Z}_+^n: |\alpha|\geq k \}$, $\lambda=(\lambda_1,\lambda_2,\cdots,\lambda_n)$ and $\lambda^{\alpha}=\lambda_1^{\alpha_1}\cdots \lambda_n^{\alpha_n}$ for $\alpha=(\alpha_1,\alpha_2,\cdots,\alpha_n)\in \mathbb{Z}_+^n$ and $|\alpha|=\sum\limits_{i=1}^{n}|\alpha_i|$.

We say that $F$ is formally linearizable at the origin if there exists a formal power series transformation, fixing the origin, which is tangent to the identity
$\Phi(z)= z+ \varphi_{\geq 2}(z)\in (\mathbb{C}[[z]])^n$ such that
\begin{equation}\label{linearization}
\Phi^{-1} \circ F \circ \Phi(z) = Az.
\end{equation}
Here, $\varphi_{\geq 2}$ vanishes at order $\geq 2$ at the origin. If $A$ is diagonal, the well known Poincar\'{e}-Dulac theorem \cite{Arn83} ensures that if there is no resonance (i.e., $\operatorname{Res}[\operatorname{spec}(A)]=\emptyset$), then $F$ is formally linearizable. As the coefficients of the formal solution of (\ref{linearization}) involves products of form $(\lambda^{\alpha}-\lambda_j)^{-1}$, when the degree $|\alpha|$ goes to infinity, its convergence in a neighborhood of the origin is very related to the so-called {\it small divisors problem} occurring when $\inf_{\alpha}|\lambda^{\alpha}-\lambda_j|=0$ for some $j\in \{1,\cdots,n\}$.

The pioneering work of C.L.Siegel \cite{Sie42}, followed by E.Zehnder \cite{Zeh77}, shows that the Diophantine condition (for some fixed positive $C_0,\sigma$)
\begin{equation}\label{smalldivisor}
|\lambda^{\alpha}-\lambda_j|> C_0|\alpha|^{-\sigma}\quad \text{for all}\quad j=1,2,\cdots,n,\quad \alpha\in \mathbb{Z}_+^n(2)
\end{equation}
is sufficient to ensure the analyticity of $\Phi$ at the origin. This Diophantine condition has been weakened by  H.R\"ussmann \cite{Ru02} (and by A.D.Brjuno \cite{Bru71} for vector fields) to~: for all $m\geq 2$,
$$
|\lambda^{\alpha}-\lambda_j|\geq \frac{1}{\Omega(m)} \quad \text{for all}~1\leq j\leq n ~\text{and}~ |\alpha|=m,
$$
where $\Omega:\mathbb{N}\rightarrow \mathbb{R}$ is a function satisfying~:for all $m\in\mathbb{N}$,
\begin{equation*}
		m\leq \Omega(m)\leq \Omega(m+1), \quad\sum_{m=1}^{\infty}\frac{\operatorname{Log}\Omega(m)}{m^2} <\infty.
\end{equation*}
It is a major achievement due to J.-C. Yoccoz who proved the necessity of this condition for holomorphic linearization of non-resonant biholomorphism of $(\mathbb{C},0)$\cite{Yoc95Asterique}.
While previous results concern the linearization problem of a single holomorphic map, T.Gramchev and M.Yoshino \cite{GY99}, L. Stolovitch \cite{Sto15}  also obtained results on simultaneous linearization of a family of commuting biholomorphisms of $(\mathbb{C}^n,0)$.

All of results above require the linear part of the holomorphic map to be semi$-$simple. Little is known about the (non)linearizability of $F$ in the analytic category. We mention the work of T.Ueda \cite{Ue99} in which a new proof of the holomorphic conjugacy to a ``lower triangular polynomial map" with contracting linear parts is given, that is when
\begin{equation}\label{UEDA}
	\max\limits_{1\leq j\leq n}|\lambda_j|<1.
\end{equation}
In particular, this ensures the holomorphic linearization in the non-resonnant case. In dimension $n=2$, J.-C.Yoccoz \cite{Yoc95Asterique}[pp.86-87] proved that in general the analytic linearization can not be achieved when the linear part is a single Jordan block associated to an eigenvalue on the unit circle. In dimension $n=3$ and $n=4$, D.Delatte and T.Gramchev \cite{DG02} gave a positive answer for biholomorphic germs whose linear parts have one nontrivial $2$-dimensional Jordan block, provided that their eigenvalues satisfy some non-resonant and Diophantine-like conditions. They also gave an example to show the need of an arithmetic condition for the convergence to hold. Their proofs rely on very explicit computations of the solution of the {\it homological equation}.

The main purpose of the present paper is to extend their positive answers to any dimension with linear parts having possibly nontrivial Jordan blocks of any dimension. In this situation, the explicit computations of Delatte-Gramchev cannot be carried on and a more conceptual framework had to be developed for that purpose. In what follows, we assume
\begin{equation}
\max_{1\leq i\leq n}|\lambda_i|\leq 1 .
\end{equation}

Let us state our main theorem. If the linear part $F'(0)$ of $F$ is not semi-simple, then a preliminary linear change of variables allows us to assume that it is a typical Jordan normal form~:
\begin{equation}\label{Jn}
F'(0)=\Lambda^{\epsilon}= \Lambda+\epsilon N ,
\end{equation}
where $\Lambda= \operatorname{diag}\{\lambda_1,\cdots,\lambda_n\}$ is the diagonal matrix, $N$ is an upper triangular nilpotent matrix where all non-zero entries are $1$ and lie on the upper-diagonal. By dilation of coordinates, $\epsilon\neq 0$ can be made arbitrarily small. Let $\{\mu_i\}_{i=1}^m$ be the set of distinct modulus of its eigenvalues, and without loss of generality we can suppose that~:
\begin{equation}\label{modulus}
1=\mu_1>\mu_2>\cdots>\mu_m.
\end{equation}
We emphasize that $1$ is a modulus since $\Lambda^{\epsilon}$ is a not a strict contraction. Denote $\{1,2,\cdots,n\}=\bigcup_{i=1}^m\mathcal{I}_i$, for each $1\leq i\leq m$, $j\in \mathcal{I}_i$ when $|\lambda_j|=\mu_i$. Thus $\mathcal{I}_i\bigcap \mathcal{I}_j=\emptyset$ if $i\neq j$. For a fixed multi-index $\alpha\in \mathbb{N}^n$, we write $(\alpha_1,\alpha_2,\cdots,\alpha_n)=(A_1,A_2,\cdots,A_m)$ and $A_i=(\alpha_{\scriptscriptstyle l_i+1},\alpha_{\scriptscriptstyle l_i+2},\cdots,\alpha_{\scriptscriptstyle l_i+{n_i}})$, where
$$
  l_1 := 0, l_{i+1} := l_i+n_i, n_i:=|\mathcal{I}_i|,~\text{for}~i\leq m.
$$
Thus we have $|A_i|=\sum_{j=1}^{n_i}\alpha_{\scriptscriptstyle l_i+j}$ and $n=\sum_{i=1}^{m}n_i$. Correspondingly, we can also denote $(\lambda_1,\cdots,\lambda_n)=(\Lambda_1,\cdots,\Lambda_m)$ and $(z_1,\cdots,z_n)=(Z_1,\cdots,Z_m)$ in a similar way. Thus for all $i=1,\cdots,m$ we have:
\begin{equation}\label{l_i+j}
\Lambda_i^{A_i}=\prod_{j=1}^{n_i}\lambda_{\scriptscriptstyle l_i+j}^{\alpha_{\scriptscriptstyle l_i+j}}~~,~~Z_i^{A_i}=\prod_{j=1}^{n_i}z_{\scriptscriptstyle l_i+j}^{\alpha_{\scriptscriptstyle l_i+j}}.
\end{equation}
If there is no confusion, for a fixed $1\leq i\leq m$, we will usually write $i_j$ for $l_i+j$ so that $\alpha_{\scriptscriptstyle i_j}$ stands for $\alpha_{\scriptscriptstyle l_i+j}$ for $j=1,2,\cdots,n_i$.
\begin{Remark}
Notice that if there is a tuple of non-negative integers $\{\kappa_i\}_{1\leq i\leq m}$ such that $\sum_{i=1}^m\kappa_i\geq 2$ and $\prod_{i=1}^{m}\mu_i^{\kappa_i}=\mu_d$ for some $3\leq d\leq m$, then we must have $\kappa_j=0$ for all $j\geq d$.
\end{Remark}
With these notations, we can introduce the definition of the {\it quasi$-$resonance}~:
\begin{Definition}\label{defqs}
Let $\{\mu_i\}_{i=1}^m$ be distinct positive numbers satisfying $(\ref{modulus})$. For each $3\leq i\leq m$, we call Quasi-Resonance (w.r.t $\mu_i$) a relation of the form if~:
\begin{equation}\label{eq_qs}
\prod_{j=2}^{i-1}\mu_j^{\kappa_j}=\mu_i,
\end{equation}
for some tuple of non-negative integers $\{\kappa_j\}_{2\leq j\leq i-1}$. We also assume that
for all $k\in \mathcal{I}_i$, for all $A_j\in \mathbb{N}^{n_j}$ such that $|A_j|=\kappa_j$ and $\sum_{j=1}^{i-1}|A_j|\geq 2$, $2\leq j\leq i-1$, we have
\begin{equation*}
\Lambda_1^{A_1}\cdot\prod_{j=2}^{i-1}\Lambda_j^{A_j}\neq \lambda_k.
\end{equation*}

\end{Definition}
In order to achieve the convergence of the linearization, we need to give the Diophantine-like assumptions for each quasi-resonance.
\begin{Definition}
We say that $\Lambda^{\epsilon}$ satisfies a QR-Diophantine condition ($QR$ stands for Quasi-Resonant) if the following holds~:
For each $3\leq i\leq m$ where there is a quasi-resonance $($w.r.t $\mu_i)$, we have~:
\begin{equation}\label{def_qr_Dio}
\left|\Lambda_1^{A_1}\cdot\prod_{j=2}^{i-1}\Lambda_j^{A_j}- \lambda_k\right|\geq C_0\left(|A_1|+\sum_{j=2}^{i-1}\kappa_j\right)^{-\sigma},
\end{equation}
\end{Definition}
for all $k\in\mathcal{I}_i,\sum_{j=1}^{i-1}|A_j|\geq 2$ and $|A_j|=\kappa_j,2\leq j\leq i-1$.

\begin{Theorem}\label{mainth1}
Let $F(z)=\Lambda^{\epsilon} z+f(z)$ be a holomorphic map in a neighborhood of 0 in $\mathbb{C}^n$, $f$ has the order greater or equal than $2$ at the origin. Assume the linear part $(\ref{Jn})$ is non-resonant and $\epsilon\neq 0$ is sufficiently small, then there is a unique transformation $z=\Phi(\zeta)=\zeta +\phi_{\geq 2}(\zeta)$, biholomorphic in a neighborhood of 0, which solves the linearization problem $F\circ \Phi=\Phi\circ \Lambda^{\epsilon}$ near the origin, if one of the following conditions is satisfied~:
\begin{enumerate}[(I)]
  \item If there is no quasi-resonance, then there exists two positive numbers $C_0,\sigma>0$ such that for all $\lambda_i,i\in \mathcal{I}_1$,
  \begin{equation}\label{small_A1}
  \left|\Lambda_1^{A_1}-\lambda_i\right|\geq C_0(|A_1|)^{-\sigma},\quad\forall~|A_1|\geq 2,
  \end{equation}
  and for any $i,j\in \mathcal{I}_k,k\geq 2$, we have
  \begin{equation}\label{small_A2}
  \left|\Lambda_1^{A_1}\lambda_i-\lambda_j\right|\geq C_0(|A_1|+1)^{-\sigma},\quad\forall~|A_1|\geq 1.
  \end{equation}
  \item If there is a quasi-resonance, then besides $(\ref{small_A1})$ and $(\ref{small_A2})$, we also assume $\Lambda^{\epsilon}$ satisfies the $\mathrm{QR}-$Diophantine condition $(\ref{def_qr_Dio})$.
\end{enumerate}
\end{Theorem}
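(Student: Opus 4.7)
The plan is to establish convergence of the unique formal linearizing transformation $\Phi=\mathrm{Id}+\phi_{\geq 2}$, whose formal existence and uniqueness follow from the non-resonance of $\Lambda^\epsilon$ by matching coefficients in
\begin{equation*}
\phi(\Lambda^\epsilon\zeta)-\Lambda^\epsilon\phi(\zeta)=f(\zeta+\phi(\zeta)).
\end{equation*}
Write $\mathcal{L}_\epsilon$ for the homological operator $\phi\mapsto\phi\circ\Lambda^\epsilon-\Lambda^\epsilon\phi$. On the monomial basis, $\mathcal{L}_\epsilon$ splits on each graded piece of degree $d$ as $\mathcal{L}_0+\mathcal{N}_d$, where $\mathcal{L}_0$ is diagonal with eigenvalues $\lambda^\alpha-\lambda_k$ and $\mathcal{N}_d$ collects all $\epsilon$-corrections induced by the Jordan part $\epsilon N$ of $\Lambda^\epsilon$. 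The strategy is to bound $\|\mathcal{L}_\epsilon^{-1}\|$ on each graded piece by $Cd^{\sigma'}$ and then to close a Siegel-R\"ussmann-type majorant / fixed-point scheme on a Banach space of germs of holomorphic maps.

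The first task is the estimate on $\mathcal{L}_0$. For $k\in\mathcal{I}_i$, write $|\lambda^\alpha|=\prod_j\mu_j^{|A_j|}$ and observe that $|\lambda^\alpha-\lambda_k|$ is bounded below by a strictly positive gap $\delta=\delta(\mu_1,\ldots,\mu_m)>0$ whenever $|\lambda^\alpha|\neq\mu_i$: either $|\lambda^\alpha|<\mu_i/2$ (direct triangle inequality), or $\sum_{j\geq 2}|A_j|$ is bounded, in which case only finitely many values of $|\lambda^\alpha|$ are possible and a minimum-gap argument closes the case. The remaining situations $|\lambda^\alpha|=\mu_i$ are precisely:
\begin{enumerate}[(a)]
\item $k\in\mathcal{I}_1$ and $A_j=0$ for $j\geq 2$: controlled by \eqref{small_A1};
\item $k\in\mathcal{I}_i$, $i\geq 2$, $|A_i|=1$, $A_j=0$ for other $j\geq 2$: controlled by \eqref{small_A2};
\item $k\in\mathcal{I}_i$, $i\geq 3$, and $(|A_2|,\ldots,|A_{i-1}|)$ realizes a quasi-resonance as in Definition \ref{defqs}: controlled by \eqref{def_qr_Dio} under hypothesis (II).
\end{enumerate}
Combined, these yield $\|\mathcal{L}_0^{-1}\|\leq C_0^{-1}d^\sigma$ on the graded piece of degree $d$.

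The second task is to transfer this bound to $\mathcal{L}_\epsilon$ despite the Jordan perturbation $\mathcal{N}_d$. The key point is that $\mathcal{N}_d$ is nilpotent with respect to a natural weight filtration on monomials $z^\alpha e_k$ induced by the Jordan-block indexing: composition with $\Lambda^\epsilon$ shifts exponents to the right within each block, while $\Lambda^\epsilon e_k$ shifts the target component to the left by one position; both operations strictly increase a combinatorial weight. Solving the homological equation along this filtration from the lowest to the highest weight reduces $\mathcal{L}_\epsilon$ to its diagonal part at each step, with the off-diagonal Jordan contribution producing only inhomogeneous terms already determined at lower levels. A preliminary anisotropic dilation within each Jordan block absorbs the $\epsilon$ factors into the polydisc and, combined with the smallness of $\epsilon$ assumed in the theorem, keeps the cumulative loss polynomial in $d$. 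Convergence on a polydisc of positive radius then follows by the standard R\"ussmann-Brjuno type majorant argument.

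The main obstacle, which is precisely the novelty over Delatte-Gramchev \cite{DG02}, is the simultaneous treatment of an arbitrary Jordan-block structure with small divisors produced by quasi-resonances. The framework above decouples the two: the modulus classification of divisors isolates small divisors to the three regimes (a)-(c) handled by the Diophantine hypotheses, while the weight filtration on monomials renders the nilpotent Jordan perturbation amenable to a recursive inversion that replaces the explicit computations of the homological equation unavailable beyond dimensions $3,4$.
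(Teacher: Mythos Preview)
Your modulus classification of small divisors (cases (a)--(c)) is correct and matches the paper's analysis. However, the second step --- transferring the bound from $\mathcal{L}_0$ to $\mathcal{L}_\epsilon$ via nilpotency of $\mathcal{N}_d$ --- has a genuine gap that your ``anisotropic dilation'' remark does not close.

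The nilpotency order of $\mathcal{N}_d$ on the full degree-$d$ space is \emph{not} bounded independently of $d$: if $\alpha$ has $|A_t|=k$ in a Jordan-block group $\mathcal{I}_t$, then $\mathcal{R}^\epsilon$ requires roughly $k(n_t-1)$ applications to annihilate $z^\alpha$, and $k$ can be as large as $d$. Your Neumann/filtration inversion therefore produces terms of the form $(\mathcal{L}_0^{-1}\mathcal{N}_d)^j\mathcal{L}_0^{-1}$ with $j$ ranging up to order $d$. Since you have only established the \emph{global} bound $\|\mathcal{L}_0^{-1}\|\lesssim d^\sigma$ on degree $d$, the resulting estimate is of order $(d^\sigma)^{d}$, which is far from polynomial and destroys any majorant scheme. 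No fixed choice of $\epsilon$ or fixed anisotropic rescaling can compensate a loss that grows with $d$.

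The paper resolves exactly this tension with its $(S,P)$-decomposition of $\mathbb{Z}_+^n(2)$, and this is the missing idea in your proposal. The point is that the two difficulties --- unbounded nilpotency order and small divisors --- never occur for the \emph{same} multi-index: on the Siegel slice $S$ (where $|\lambda^\alpha|=|\lambda_k|$ for some $k$) the block-degrees $|A_j|$, $j\geq 2$, are bounded by the quasi-resonance data, so $\mathcal{R}^\epsilon$ has nilpotency order $\eta$ independent of $d$ (Lemma~\ref{nilR}), and the Diophantine bounds give $\|\mathcal{L}_0^{-1}\|\lesssim d^\sigma$; this yields $\|\mathcal{L}_\epsilon^{-1}|_S\|\lesssim d^{\sigma\eta n}$. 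On the Poincar\'e slice $P$ one has $|\lambda^\alpha|\neq|\lambda_k|$, so $\|\mathcal{L}_0^{-1}|_P\|$ is uniformly bounded, and the contraction $|\lambda_j|<1$ for Jordan-block eigenvalues gives $\|\mathcal{R}^\epsilon\|\leq\epsilon\tilde C$ in the polydisc norm (Lemma~\ref{lemmaR}), so the Neumann series converges geometrically regardless of the nilpotency order. The invariance $\mathcal{L}^{\pm1}S\subset S$, $\mathcal{L}^{\pm1}P\subset P$ (Lemma~\ref{invariance}) is what makes this separation legitimate. Your proposal implicitly needs all of this but does not supply it; once the $(S,P)$-splitting is in place, either a majorant argument or the paper's Zehnder-type KAM iteration will close the proof.
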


\begin{Remark}
Assuming we have $(\ref{modulus})$, then there are only finite numbers of quasi-resonance. In fact, for each $i=3,4,\cdots,m$, there are at most finite tuples of non-negative integers $\{\kappa^{(i)}:\kappa^{(i)}\in\mathbb{N}^{i-2}\}$ satisfying $(\ref{eq_qs})$.
\end{Remark}
The main idea for proof of the convergence is to estimate the solutions of {\it homological equations} $\mathcal{L}(f)=g$, that is the ``linearized version" of the conjugacy equation. In order to do that, we decompose the set of monomials along two sets called {\it Poincar\'e slice} and {\it Siegel slice} respectively. We then decompose the Taylor expansion of any germ of holomorphic function at the origin accordingly. We show that these slices are invariant sets of the homological operator. This enables to solve the projected equations $\mathcal{L}(f_P)=g_{\scriptscriptstyle P}$ and $\mathcal{L}(f_S)=g_{\scriptscriptstyle P}$. The aim of this decomposition is that on the Siegel slice we will encounter small divisors but we have arithmetic conditions to control it, while on the Poincar\'{e} slice, we have the complicated remainders brought by Jordan blocks but we do not encounter small divisors so that we have a uniform bound for the solution to the homological equation restricted to that set. The details will be seen in section \ref{re_decom}.

The paper is organized as follows: In section \ref{Notation} we introduce some basic notations and definitions of norms that we would need to use in the KAM scheme. Section \ref{re_decom} introduces the homological operator as well as {\it Poincar\'e slice} and {\it Siegel slice}. In section \ref{estremain}, we deal with the estimation of the remainders in the homological equation which is brought by nontrivial Jordan blocks. In section \ref{estofL}, we give the most important estimation of the solution to the homological equation, which will be used several times in the proof of the convergence in the section \ref{convergenceproof}.

\section{Notations and preliminaries}\label{Notation}
For an integer $k\geq 1$, and $\rho> 0,0<\delta<r\leq1$,
\begin{itemize}
\item $\mathcal{P}_k^n$ denotes the $\mathbb{C}-$ space of homogeneous polynomial vector fields on $\mathbb{C}^n$ and of degree $k$,
\item $p_k^n$ denotes the $\mathbb{C}-$ space of homogeneous polynomials on $\mathbb{C}^n$ of degree $k$,
\item $\mathcal{O}^n$ denotes the ring of germs at $0$ of holomorphic functions in $\mathbb{C}^n$,
\item $\mathbb{C}[[z]]_{\geq k}=\left\{f\left|\right.f(z)=\sum\nolimits_{\alpha\in \mathbb{Z}_+^n(k)}f_{\alpha}z^{\alpha},\quad f_{\alpha}\in \mathbb{C}\right\}$.
\end{itemize}

When $z\in\mathbb{C}^n,|z|:=\max_{1\leq i\leq n}|z_i|$. Denote $B_R=\{z\in\mathbb{C}^n:|z_i|\leq r_i,\forall~ 1\leq i\leq n\}$ for $R=(r_1,r_2,\cdots,r_n)$ with positive numbers $r_i$. For a formal power series $g(z)\in\mathbb{C}[[z]]$ denote $\bar{g}(z)=\sum\nolimits_{\alpha\in \mathbb{Z}_+^n}|g_{\alpha}|z^{\alpha}$, the common polydisc norm is $\bm{|}g\bm{|}_R=\bar{g}(r_1,r_2,\cdots,r_n)$ corresponding with $B_R$.
We set an asymmetric ball: for two disjoint sets $D$ and $J$ such that $D\bigcup J=\{1,2,\cdots,n\}$, where actually $\lambda_j,j\in J$ corresponds to the Jordan block. Denote
\begin{equation}\label{B_rrho}
B_{r,\rho}=\{z\in \mathbb{C}^n: |z_d|< r, d\in D ~~\text{and}~~ |z_j|<\rho, j\in J\}, \quad B_r:=B_{r,r}.
\end{equation}
We introduce two Banach algebras~:
\begin{equation*}
H(B_{r,\rho})=\left\{g(z)=\sum\nolimits_{\alpha\in \mathbb{Z}_+^n}g_{\alpha}z^{\alpha}, g_{\alpha}\in \mathbb{C}: \bm{|}g\bm{|}_{r,\rho}<+\infty \right\},
\end{equation*}
with the polydisc norm corresponding to $B_{r,\rho}$ :
\begin{equation*}
\bm{|}g\bm{|}_{r,\rho} := \bar{g}(r,\rho), ~~z_i=r ~~\text{if}~i\in I,~ z_j=\rho ~~\text{if}~j\in J.
\end{equation*}
Let $D\supseteq B_{r,\rho}$  be an open set.  Let $\mathcal{O}_{2}(D)$ be the space of all holomorphic functions in $D$ such that $f(0)=0, \partial_{z_j}f(0)=0, j=1,2,\cdots,n$. Let us define
\begin{equation*}
\mathcal{H}(D)=\left\{f \in C(\overline{D}) \cap\mathcal{O}^n_{2}(D):\|f\|_{D}<+\infty\right\},\quad \|f\|_D=\sup _{z \in D}|f(z)|.
\end{equation*}
For all vector-valued functions $F=(f_1,\ldots, f_n$), the norm $|F|_{r,\rho}$ (resp. $\|F\|$) denote the maximum of the norm of its components. To simplify the notation, we also denote $\|F\|_r = \|F\|_{B_r}$.
By Cauchy estimates, we have~:
\begin{equation}\label{polyineq}
\|g\|_{r-\delta}\leq \bm{|}g\bm{|}_{r-\delta}\leq \left(\frac{r}{\delta}\right)^n \|g\|_r.
\end{equation}
For an $n\times n$ matrix $A$, we set $|A|:=\max\limits_{1\leq i,j\leq n}|a_{ij}|$.  For $z\in\mathbb{C}^n$, we have~:
\begin{equation}\label{matrix_mod}
|Az|\leq n|A||z|.
\end{equation}
We introduce two basic inequalities (e.g. \cite{Sto00}[p.149]) which will be used in section \ref{estremain}:
\begin{equation}\label{cauchyest}
\begin{aligned}
\bm{|}f\bm{|}_{r-\delta}&\leq \left(\frac{r-\delta}{r}\right)^k \bm{|}f\bm{|}_r,  ~\text{if}~ \text{ord(f)}\geq k ,\\
\left|\frac{\partial f}{\partial z_i}\right|_r &\leq \frac{k}{r} \bm{|}f\bm{|}_r, ~\text{if}~ f\in \mathcal{P}_n^k.
\end{aligned}
\end{equation}
We shall write $f_{\geq k}$ to emphasize that $f$ has order $\geq k$ at the origin.

At last, we introduce a domain and a norm which is need in the proof of convergence (here $\epsilon$ is fixed)~:
$$D_r:=B_r\cup \Lambda^{\epsilon} (B_r),\quad \|g\|_{C^1,r}:= \max\{\|g\|_{D_r}, \|Dg\|_{D_r} \}.$$

\section{Representation of the homological operator and its decomposition}\label{re_decom}
Let $F(z)=\Lambda^{\epsilon}z+f_{\geq 2}(z)$ be a germ of biholomorphism of $(\mathbb{C}^n,0)$. Let $\Phi(y)=y+\phi(y)$ be a linearizing change of variables~: $F\circ \Phi=\Phi(\Lambda^{\epsilon}z)$. Hence, $\phi$, which vanishes at order $\geq 2$ at the origin,  solves  $\phi\circ \Lambda^{\epsilon}-\Lambda^{\epsilon}\phi=f_{\geq 2}(I+\phi)$. Hence, it is natural to introduce the {\it homological equation/operator}~:
\begin{equation}\label{Lphi_g}
\mathcal{L}(\varphi):=\varphi\circ \Lambda^{\epsilon}-\Lambda^{\epsilon}\varphi= g ,\quad\quad\quad g\in H(B_r)^n.
\end{equation}
We want first to find a  formal solution $\varphi\in\mathbb{C}[[z]]^n$ and then we want to estimate $\varphi$ with respect to a given $g$. 
With the above settings, we can rewrite $\mathcal{L}(\varphi)$ in (\ref{Lphi_g}) as :
\begin{equation}\label{decomLphi_g}
\mathcal{L}(\varphi)= (\Omega-\epsilon N)\varphi+ \mathcal{R}^{\epsilon}\varphi :=\mathcal{L}^{\epsilon}\varphi+\mathcal{R}^{\epsilon}\varphi ,
\end{equation}
where
\begin{equation}\label{Ldecomp}
\Omega(\varphi)= \varphi \circ \Lambda - \Lambda \varphi,\quad \mathcal{R}^{\epsilon}(\varphi) = \varphi \circ (\Lambda+\epsilon N)-\varphi \circ \Lambda.
\end{equation}
Given a formal power series $f(z)=\sum_{\alpha}f_{\alpha}z^{\alpha}$,  we have
\begin{equation}\label{OmegaN}
\Omega (f)(z)= \sum_{\alpha} (\Omega_{\alpha}f_{\alpha})z^{\alpha},\quad N(f)(z)= \sum_{\alpha} (N f_{\alpha})z^{\alpha},
\end{equation}
where $\Omega_{\alpha}= \operatorname{diag}\{\lambda^{\alpha}-\lambda_1,\cdots,\lambda^{\alpha}-\lambda_n\}$ is a diagonal matrix. As $\Lambda^{\epsilon}$ is Jordan matrix, the matrix $\Omega_{\alpha}$ and $N$ are commuting for each $\alpha\in \mathbb{Z}_+^n(2)$. Thus due to the nilpotency of $N$ and the non-resonance of the eigenvalues of $\Lambda^{\epsilon}$, $(\Omega-\epsilon N)$ is invertible at the formal level (i.e. on $\mathbb{C}[[z]]_{\geq 2}^n$), we have
\begin{equation}\label{inverse-d}
(\Omega_{\alpha}-\epsilon N)^{-1} = \Omega^{-1}_{\alpha}+\sum_{s=1}^{d-1}\epsilon^s \Omega^{-s-1}_{\alpha}N^s ,
\end{equation}
where $d$ is the nilpotency order of $N$.

Hence, emphasizing the {\it remainder operator} $\mathcal{R}^{\epsilon}$, equation (\ref{Lphi_g}) reads:
\begin{equation}\label{inverse-phi}
\varphi = (\Omega-\epsilon N)^{-1}g - (\Omega-\epsilon N)^{-1} \mathcal{R}^{\epsilon}\varphi.
\end{equation}

The abstract equation above plays an important role in the estimation of the solution to the homological equation. The main difficulty in the Jordan block case is to estimate the remainder term $\mathcal{R}^{\epsilon}$. To overcome this difficulty, we introduce the most important idea : (S,P)-decomposition, which is inspired by D.Delatte and T.Gramchev \cite{DG02}[p.10, Definition 2.4]. The aim of this decomposition is that on the Siegel slice we will encounter small divisors but we have arithmetic conditions to control it, while on the Poincar\'{e} slice, although we have complicated remainders brought by Jordan blocks, we do not encounter small divisors and we will have a uniform bound for the solution to the homological equation restricted on it. Briefly speaking, we want to find two disjoint subsets $S$ and $P$ of $\mathbb{Z}^n_+(2)$ such that the following conditions holds:
\begin{enumerate}[i)]
  \item $\mathbb{Z}^n_+(2)= S\bigcup P,\quad S\bigcap P=\varnothing , $
  \item Invariance: $\mathcal{L}^{\pm 1} S \subset S ,\quad \mathcal{L}^{\pm 1} P \subset P$.
  \item The inverse $\mathcal{L}^{-1}_{\scriptscriptstyle S}$ of restriction of the operator on Siegel slice $\mathcal{L}_{\scriptscriptstyle S}:=\mathcal{L}|_{\scriptscriptstyle S}$, is related to small divisors problem, but Diophantine conditions for $\alpha\in S$ gives a control;
  \item Poincar\'e slice $\mathcal{L}_{\scriptscriptstyle P}:=\mathcal{L}|_{\scriptscriptstyle P}$ does not involve in any small divisors, thus we have a uniform estimate of its inverse $\mathcal{L}^{-1}_{\scriptscriptstyle P}$.
\end{enumerate}
We rewrite $\varphi \in \mathbb{C}[[z]]^n$ along such a decomposition~:
\begin{equation}\label{P_S}
\begin{gathered}
\varphi(z)=\sum_{\alpha\in \mathbb{Z}_+^n(2)}\varphi_{\alpha}z^{\alpha}= \varphi_{\scriptscriptstyle S}(z)+\varphi_{\scriptscriptstyle P}(z),\\
\varphi_{\scriptscriptstyle S}(z)=\sum_{\alpha\in S}\varphi_{\alpha}z^{\alpha}, \quad
\varphi_{\scriptscriptstyle P}(z)=\sum_{\alpha\in P}\varphi_{\alpha}z^{\alpha}.
\end{gathered}
\end{equation}
By abuse of notations, we shall write that $\varphi(z)\in S$ if for all all monomials (of the Taylor expansion at $0$) of $\varphi(z)$ belong to $S$.

Let us define the sets $S$ and $P$ according to the diffrent cases of Theorem \ref{mainth1}~:

In case $(\uppercase\expandafter{\romannumeral1})$, as there is no quasi-resonance, we decompose $\mathbb{Z}_+^n(2)$ for $\Lambda^{\epsilon}$ in (\ref{Jn}) as follows~:
\begin{equation}\label{decomp_nqr}
\begin{gathered}
S:=\{\alpha\in \mathbb{Z}_+^n(2): \sum_{i=2}^{m}|A_i|\leq 1 \},\\
P:=\{\alpha\in \mathbb{Z}_+^n(2): \sum_{i=2}^{m}|A_i|\geq 1  \}.
\end{gathered}
\end{equation}
In case $(\uppercase\expandafter{\romannumeral2})$, there are quasi-resonances. 
 Let us denote
\begin{equation*}
QR_i=\left\{\kappa^{(i)}=(\kappa^{(i)}_2,\kappa^{(i)}_3,\cdots,\kappa^{(i)}_{i-1})\in \mathbb{N}^{i-2}: \prod_{j=2}^{i-1}\mu_j^{\kappa^{(i)}_j}=\mu_i\right\}
\end{equation*}
the set of quasi-resonances w.r.t $\mu_i$. According to $(\ref{modulus})$, then for each $3\leq i\leq m$, there exists at most finite quasi-resonances, i.e., $\sharp|QR_i|< \infty$. By non-resonance condition we also have:
\begin{equation*}
\Lambda_1^{A_1}\cdot\prod_{j=2}^{i-1}\Lambda_j^{A_j}\neq \lambda_k,
\end{equation*}
for all $k\in\mathcal{I}_i, \sum_{j=1}^{i-1}|A_j|\geq 2$ and $|A_j|=\kappa^{(i)}_j,2\leq j\leq i-1$.
Denote the subsets of $\mathbb{Z}_+^n(2)$ as the following:
\begin{equation}\label{s_1s_2}
\begin{aligned}
S_1&:=\{\alpha\in \mathbb{Z}_+^n(2): |A_j|=0~,\text{for}~2\leq j\leq m\},\\
S_2&:=\{\alpha\in \mathbb{Z}_+^n(2): \sum_{l=2}^{m}|A_j|=1\},
\end{aligned}
\end{equation}
and for any $\kappa\in QR_i$,
\begin{equation}\label{s_ik}
\begin{aligned}
S_{i,\kappa}&:=\{\alpha\in \mathbb{Z}_+^n(2):|A_j|=\kappa_j~\text{for}~2\leq j\leq i-1,
|A_j|=0~\text{for}~i\leq j\leq m\},\\
S_i&:=\bigcup_{\kappa\in QR_i}S_{i,\kappa},\quad 3\leq i\leq m.
\end{aligned}
\end{equation}
We define the $(S,P)-$decomposition to be~:
\begin{equation}\label{decomposition}
S:=\bigcup_{i=1}^{m}S_i,\quad P:=\bigcap_{i=1}^{m}S_i^c,
\end{equation}
where $S_i^c$ denotes the complement of $S_i$ in $\mathbb{Z}_+^n(2)$.
\begin{Remark}
The idea for defining these slices is to select $\alpha\in \mathbb{Z}_+^n(2)$ such that $|\lambda^{\alpha}|=|\lambda_i|$.
\end{Remark}

Let us check that the first two properties $\romannumeral1),\romannumeral2)$ of our decomposition. The last two properties will be seen in the following sections.
\begin{Lemma}\label{invariance}
The homological operator $(\ref{decomLphi_g})$ as well as decomposition $(\ref{Ldecomp})$ have the following invariance property~: For each Siegel slice $S_i$, $i=1,2,\cdots,m$, we have~:
\begin{equation*}
\mathcal{L}^{\pm1} S_i\subset S_i.
\end{equation*}
On Poincar\'e slice we have~:
\begin{equation*}
\mathcal{L}^{\pm1} P\subset P.
\end{equation*}
\end{Lemma}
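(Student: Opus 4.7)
The plan is to observe that each Siegel slice $S_i$ and the Poincaré slice $P$ depends on the multi-index $\alpha$ only through the $m$-tuple of block-degrees $(|A_1|,\dots,|A_m|)$. I would therefore refine the decomposition and prove the stronger statement that, for every $\mathbf{a}=(a_1,\dots,a_m)\in\mathbb{N}^m$ with $|\mathbf{a}|\geq 2$, the finite-dimensional subspace
\[
V_{\mathbf{a}}:=\mathrm{span}\bigl\{z^{\alpha}\mathbf{v}:|A_k|=a_k\text{ for every }k,\ \mathbf{v}\in\mathbb{C}^n\bigr\}
\]
is $\mathcal{L}$-invariant. Each $S_i$ and $P$ is a disjoint union of such $V_{\mathbf{a}}$'s (read off directly from the definitions (\ref{decomp_nqr}), (\ref{s_1s_2}), (\ref{s_ik}), (\ref{decomposition})), so the invariance claim for $\mathcal{L}$ will follow.

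Using the splitting $\mathcal{L}=(\Omega-\epsilon N)+\mathcal{R}^{\epsilon}$ of (\ref{decomLphi_g})--(\ref{Ldecomp}), I would treat each summand separately. Both $\Omega$ and $\epsilon N$ preserve the multi-index $\alpha$ of a monomial $z^{\alpha}\mathbf{v}$ (the first multiplies the coefficient vector by $\lambda^{\alpha}I-\Lambda$, the second acts only on the vector component), so the invariance of $V_{\mathbf{a}}$ is immediate. The only substantive step is the analysis of $\mathcal{R}^{\epsilon}$. Expanding
\[
(\Lambda^{\epsilon}z)^{\alpha}=\prod_{i=1}^{n}\bigl(\lambda_i z_i+\epsilon N_{i,i+1}z_{i+1}\bigr)^{\alpha_i}
\]
by the binomial theorem produces monomials $z^{\beta}$ with $\beta_i=\alpha_i+s_{i-1}-s_i$, where $s_i\in\{0,\dots,\alpha_i\}$ records the number of shifted factors at position $i$ and is forced to vanish whenever $N_{i,i+1}=0$. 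The key structural input is that every Jordan block of $\Lambda^{\epsilon}$ carries a single eigenvalue and is therefore contained in one $\mathcal{I}_k$; in particular, the super-diagonal entries of $N$ straddling two distinct $\mathcal{I}_k$'s all vanish, so $s_{l_k}=s_{l_k+n_k}=0$ for every $k$. A telescoping identity then gives
\[
|B_k|-|A_k|=\sum_{i\in\mathcal{I}_k}(s_{i-1}-s_i)=s_{l_k}-s_{l_k+n_k}=0
\]
for every $k$, so $\beta$ has the same block-degrees as $\alpha$ and $z^{\beta}\mathbf{v}\in V_{\mathbf{a}}$.

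Finally, to transfer invariance from $\mathcal{L}$ to $\mathcal{L}^{-1}$, I would combine the direct sum decomposition $\mathbb{C}[[z]]_{\geq 2}^{n}=\bigoplus_{|\mathbf{a}|\geq 2}V_{\mathbf{a}}$ with the formal invertibility of $\mathcal{L}$ provided by (\ref{inverse-d}) under the non-resonance hypothesis on $\mathrm{spec}(\Lambda^{\epsilon})$. Since $\mathcal{L}$ is injective on $\mathbb{C}[[z]]_{\geq 2}^{n}$ and sends the finite-dimensional $V_{\mathbf{a}}$ into itself, the restriction $\mathcal{L}|_{V_{\mathbf{a}}}$ is a bijection of $V_{\mathbf{a}}$, and consequently $\mathcal{L}^{-1}(V_{\mathbf{a}})=V_{\mathbf{a}}$. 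Taking the union over the tuples $\mathbf{a}$ labelling a prescribed slice delivers $\mathcal{L}^{-1}S_i\subset S_i$ and $\mathcal{L}^{-1}P\subset P$. The hard part will be the combinatorial bookkeeping of the shifts $s_i$ in the expansion of $\mathcal{R}^{\epsilon}$ and the observation that Jordan blocks never cross $\mathcal{I}_k$-boundaries; everything else reduces to linear-algebraic formalities once this grading-preservation is established.
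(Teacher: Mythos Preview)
Your proposal is correct and rests on the same key observation as the paper: expanding $\mathcal{R}^{\epsilon}$ via the binomial theorem shows that every monomial produced from $z^{\alpha}$ has the same block-degree vector $(|A_1|,\dots,|A_m|)$, because a Jordan block carries a single eigenvalue and hence lies entirely inside one $\mathcal{I}_k$. The paper reaches this by explicit case analysis---first a single Jordan block $\Lambda_t$, then two blocks, then alluding to the general pattern---listing the new exponents $\tilde\alpha_{t_l}$ and checking $|\tilde A_t|=|A_t|$ by hand; it then verifies separately that each $S_i$ and each complement $S_i^c$ is $\mathcal{R}^{\epsilon}$-stable. Your route is more streamlined: introducing the finer grading $V_{\mathbf{a}}$ from the outset and proving invariance of each piece in one stroke via the telescoping identity $\sum_{i\in\mathcal{I}_k}(s_{i-1}-s_i)=s_{l_k}-s_{l_k+n_k}=0$ makes the argument uniform in the number of blocks and sidesteps the case analysis. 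Your handling of $\mathcal{L}^{-1}$---restricting the formally injective $\mathcal{L}$ to the finite-dimensional $V_{\mathbf{a}}$ and invoking rank--nullity---is also more explicit than the paper's, which essentially asserts $\mathcal{L}^{\pm1}S_i\subset S_i$ once the forward inclusion is established. Both approaches deliver the same conclusion; yours buys uniformity and a cleaner justification of the inverse, at the small cost of introducing the auxiliary grading.
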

\begin{proof}
Recalling (\ref{OmegaN}), we observe that $\Omega(\varphi)$ and $N(\varphi)$ preserve each monomial of $\varphi$. Hence the operators $\Omega$ and $N$ satisfy the invariance property. For the remainder part, first we need to know the specific form of the linear part $(\ref{Jn})$. Assume there is only one Jordan block corresponding to $\Lambda_t$, here we use the notations of $(\ref{l_i+j})$ and replace $l_t+j$ by $t_j$ for simplicity. Let us write
$$\varphi(z)=\sum\limits_{\alpha\in\mathbb{Z}_+^n(2)}\varphi_{\alpha}z^{\alpha}=\sum\limits_{A\in\mathbb{Z}_+^n(2)}\varphi_A Z^A.$$
By (\ref{Ldecomp}), we have~:
\begin{equation}\label{R_phi}
\begin{aligned}
&\mathcal{R}^{\epsilon}\varphi(z)
=\varphi \circ (\Lambda+\epsilon N)-\varphi \circ \Lambda \\
=&\sum_{A}\varphi_A\mathop{\prod}_{1\leq j\leq m \atop j\neq t}(\Lambda_jZ_j)^{A_j}
\left[\prod_{l=1}^{n_t-1}(\lambda_{t_l} z_{t_l}+\epsilon z_{t_{l+1}})^{\alpha_{t_l}}\cdot(\lambda_{t_{n_t}} z_{t_{n_t}})^{\alpha_{t_{n_t}}}-
\prod_{l=1}^{n_t}(\lambda_{t_l} z_{t_l})^{\alpha_{t_l}}\right].
\end{aligned}
\end{equation}
We claim that  the operator $\mathcal{R}^{\epsilon}$ does not change the total degree of those variables corresponding to the Jordan block and it keeps each monomial of those variables corresponding to the diagonal eigenvalues.\\
In fact, for a monomial $z_{t_1}^{\alpha_{t_1}}\cdots z_{t_{n_t}}^{\alpha_{t_{n_t}}}$ the variables of which correspond to $\Lambda_t$, assume its total degree $|A_t|=\sum_{l=1}^{n_t}\alpha_{t_l}=\kappa$ for some $\kappa$. By binomial expansion, for each $1\leq l\leq n_t-1$ we have~:
\begin{equation}\label{binomial_l}
(\lambda_{t_l} z_{t_l}+\epsilon z_{t_{l+1}})^{\alpha_{t_l}}=\sum_{s_l=0}^{\alpha_{t_l}}\binom{\alpha_{t_l}}{s_l}(\lambda_{t_l} z_{t_l})^{s_l}(\epsilon z_{t_{l+1}})^{\alpha_{t_l}-s_l}.
\end{equation}
Let $\tilde{A_t}=(\tilde{\alpha}_{t_1},\cdots,\tilde{\alpha}_{t_{n_t}})$  be the exponent of one of the new monomials in the variables $z_{t_1},\cdots,z_{t_{n_t}}$ appearing in the bracket of the formula $(\ref{R_phi})$. Each new monomial coming from the product in the bracket in $(\ref{R_phi})$, is obtained by choosing different $s_l$ in $(\ref{binomial_l})$, that is,
\begin{equation*}
\begin{aligned}
\tilde{\alpha}_{t_1}&=s_1,\\
\tilde{\alpha}_{t_{l+1}}&=\alpha_{t_l}-s_l+s_{l+1},\\
\tilde{\alpha}_{t_{n_t}}&=\alpha_{t_{n_t-1}}-s_{n_t-1}+\alpha_{t_{n_t}},
\end{aligned}
\end{equation*}
for $1\leq l\leq n_t-2$. This directly shows~:
\begin{equation*}
\begin{aligned}
|\tilde{A}_t|=\sum_{l=1}^{n_t}\tilde{\alpha}_{t_l}=&s_1+\sum_{l=1}^{n_t-2}(\alpha_{t_l}-s_l+s_{l+1})+\alpha_{t_{n_t-1}}-s_{n_t-1}+\alpha_{t_{n_t}}\\
=&\sum_{l=1}^{n_t}\alpha_{t_l}=|A_t|=\kappa.
\end{aligned}
\end{equation*}
Thus the total degree of all of the new monomials corresponding to $\Lambda_t$ does not change and the degree of each one of the other variables is kept fixed, which proves our claim.

Similarly, if there are two blocks corresponding to $\Lambda_t$ and $\Lambda_s$, then
\begin{align*}
\mathcal{R}^{\epsilon}\varphi(z)
&=\varphi \circ (\Lambda+\epsilon N)-\varphi \circ \Lambda \\
=\sum_{\alpha}\varphi_{\alpha}&\mathop{\prod}_{1\leq j\leq m \atop j\neq t,s}(\Lambda_jZ_j)^{A_j}
\prod_{l=1}^{n_t-1}(\lambda_{t_l} z_{t_l}+\epsilon z_{t_{l+1}})^{\alpha_{t_l}}\cdot(\lambda_{t_{n_t}} z_{t_{n_t}})^{\alpha_{t_{n_t}}} \\
\cdot &\prod_{l=1}^{n_s-1}(\lambda_{s_l} z_{s_l}+\epsilon z_{s_{l+1}})^{\alpha_{s_l}}\cdot(\lambda_{s_{n_s}} z_{s_{n_s}})^{\alpha_{s_{n_s}}} \\
-\sum_{\alpha}\varphi_{\alpha}&\mathop{\prod}_{1\leq j\leq m \atop j\neq t,s}(\Lambda_jZ_j)^{A_j}
\prod_{l=1}^{n_t}(\lambda_{t_l} z_{t_l})^{\alpha_{t_l}}\cdot
\prod_{l=1}^{n_s}(\lambda_{s_l} z_{s_l})^{\alpha_{s_l}}.
\end{align*}
Observe that if we restricted the operator on
\begin{equation}\label{A_former}
|A_t|=\kappa_t,|A_s|=\kappa_s,
\end{equation}
for some fixed integers $\kappa_t,\kappa_s$, then for the new exponents of monomials produced, $|\tilde{A_t}|,|\tilde{A_s}|$ will be maintained, i.e.,
\begin{equation}\label{A_latter}
|\tilde{A_t}|=\kappa_t,|\tilde{A_s}|=\kappa_s.
\end{equation}
Actually we find that once the linear part has a Jordan block, the total degree of the corresponding monomials will always be maintained, moreover, each monomial of the variables corresponding to diagonal part is kept fixed. Considering the way of choosing the Siegel slice $(\ref{s_1s_2})$ and $(\ref{s_ik})$, we can deduce that for each $1\leq i\leq m$, $(\mathcal{R}^{\epsilon})^{\pm1}S_i\subset S_i$. Thus actually we have :
\begin{equation*}\label{LS_i_in_S_i}
\mathcal{L}^{\pm1} S_i\subset S_i.
\end{equation*}

On Poincar\'e slice, considering the choice of the Siegel slice $(\ref{s_1s_2})$ and $(\ref{s_ik})$ are all based on $|A_j|=a$ for some fixed $a=0,1$ or $a=\kappa_j$ in $(\ref{s_ik})$. Combining $(\ref{A_former})$ and $(\ref{A_latter})$, when an $\alpha\in S_1^c\bigcap S_2^c$, we have the corresponding indices $|\tilde{A}_j|\neq 0,1$ for one of $j\in \{2,\cdots,m\}$, thus
\begin{equation}\label{R_s1s2}
\mathcal{R}^{\epsilon}(S_1^c\bigcap S_2^c) \bigcap (S_1\bigcup S_2)=\emptyset.
\end{equation}
On the other hand, for each $ 3\leq i \leq m$  and  $\alpha\in S_{i,\kappa}^c$, we have either $|\tilde{A}_j|\neq \kappa_j$ for some $ j\in \{2,\cdots,i-1\}$, or $|\tilde{A}_j|\neq 0$ for some $j\geq i$. This means $\mathcal{R}^{\epsilon}(S_{i,\kappa}^c)\bigcap S_{i,\kappa}=\emptyset$, and naturally we have
\begin{equation*}
\mathcal{R}^{\epsilon}(\bigcap_{\kappa\in QR_i} S_{i,\kappa}^c)\subset \bigcap_{\kappa\in QR_i}\mathcal{R}^{\epsilon}( S_{i,\kappa}^c),
\end{equation*}
thus
\begin{equation*}\label{}
\mathcal{R}^{\epsilon}(\bigcap_{\kappa\in QR_i} S_{i,\kappa}^c) \bigcap (\bigcup_{\kappa\in QR_i}S_{i,\kappa})=\emptyset,\quad \text{i.e. }\mathcal{R}^{\epsilon}(S_i^c) \bigcap (S_i)=\emptyset.
\end{equation*}
Similarly we have
\begin{equation}\label{R_si}
\mathcal{R}^{\epsilon}(\bigcap_{i=3}^{m} S_i^c) \bigcap (\bigcup_{i=3}^{m} S_i)=\emptyset .
\end{equation}
Combining $(\ref{R_s1s2})$ and $(\ref{R_si})$ we have
\begin{equation}\label{R_s}
\mathcal{R}^{\epsilon}(\bigcap_{i=1}^{m} S_i^c) \bigcap (\bigcup_{i=1}^{m} S_i)=\emptyset .
\end{equation}
This directly shows that
\begin{equation*}
\mathcal{R}^{\epsilon}(P)=\mathcal{R}^{\epsilon}(\bigcap_{i=1}^{m}S_i^c)\subset \bigcap_{i=1}^{m}S_i^c=P.
\end{equation*}
Thus the lemma is proved.
\end{proof}
This shows that our $(S,P)-$decomposition satisfies the invariance property $\romannumeral1),\romannumeral2)$. The left two properties will be proved in the following sections. To summarize, $(S,P)-$decomposition allows us to consider it separately within the two slices in order to get a final convergence of the solution to the convergent linearization problem.

\section{Estimate of the remainder operator $\mathcal{R}^{\epsilon}$}\label{estremain}

In order to simplify the notations and statements, we will first consider the case of a single Jordan block. We assume it corresponds to $\Lambda_d$ for some fixed integer $d\geq 2$. In this case, the asymmetric ball $(\ref{B_rrho})$ reads~:
\begin{equation}\label{BallJn}
B_{r,\rho}=\{z\in \mathbb{C}^n :|z_i|<r ~\text{for}~i\notin \mathcal{I}_d~ , |z_j|<\rho ~\text{for}~j \in \mathcal{I}_d\}.
\end{equation}
Using notations from $(\ref{l_i+j})$, we will replace $l_i+j$ by $i_j$.

In this section we give a crucial estimate for the homological operator restricted to the Poincar\'{e} slice~:
\begin{Lemma}\label{lemmaR}
Let $\epsilon$ be small enough (depending only on $\Lambda$). Then there is a constant $\tilde{C}$ which depends only on $\Lambda$, such that, for any $\varphi \in H(B_{r,\rho})$,
\begin{equation}\label{est_remainder}
\bm{|}\mathcal{R}^{\epsilon}\varphi\bm{|}_{r,\rho}\leq \epsilon \tilde{C} \bm{|}\varphi\bm{|}_{r,\rho} .
\end{equation}
\end{Lemma}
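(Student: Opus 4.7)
The plan is to estimate $\mathcal{R}^{\epsilon}\varphi$ monomial by monomial using the explicit formula $(\ref{R_phi})$. Fix $\varphi=\sum_{\alpha}\varphi_\alpha z^\alpha\in H(B_{r,\rho})$. Since the perturbation $\epsilon N$ only mixes coordinates inside the single Jordan block attached to $\Lambda_d$, the non-Jordan factors $(\Lambda_j Z_j)^{A_j}$ for $j\neq d$ are common to both terms of $\varphi\circ(\Lambda+\epsilon N)-\varphi\circ\Lambda$, and under the norm $\bm{|}\cdot\bm{|}_{r,\rho}$ they contribute at most $\prod_{j\neq d} r^{|A_j|}$ because $|\lambda_i|\leq 1$. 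For each fixed $\alpha$ the task therefore reduces to estimating the Jordan-block polydisc norm at radius $\rho$ of
\begin{equation*}
P_\alpha-Q_\alpha,\quad P_\alpha:=\prod_{l=1}^{n_d-1}(\lambda_{d_l} z_{d_l}+\epsilon z_{d_{l+1}})^{\alpha_{d_l}}(\lambda_{d_{n_d}} z_{d_{n_d}})^{\alpha_{d_{n_d}}},\quad Q_\alpha:=\prod_{l=1}^{n_d}(\lambda_{d_l} z_{d_l})^{\alpha_{d_l}}.
\end{equation*}

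Next I expand $P_\alpha$ via the binomial identity $(\ref{binomial_l})$. The resulting monomials in $z_{d_1},\ldots,z_{d_{n_d}}$ are pairwise distinct (from $s_1$ one reads the exponent of $z_{d_1}$, which then determines $s_2$ from the exponent of $z_{d_2}$, and so on), and the single expansion term without any factor of $\epsilon$, corresponding to $s_l=\alpha_{d_l}$ for every $l$, is exactly $Q_\alpha$ with matching coefficient $\prod_l \lambda_{d_l}^{\alpha_{d_l}}$. Consequently, writing $\mu_d$ for the common modulus of the eigenvalues of the Jordan block, this one monomial cancels in $P_\alpha-Q_\alpha$ and the remaining monomials keep distinct exponents, so taking absolute values coefficient by coefficient gives
\begin{equation*}
\bm{|}P_\alpha-Q_\alpha\bm{|}_\rho = \bm{|}P_\alpha\bm{|}_\rho-\bm{|}Q_\alpha\bm{|}_\rho = \mu_d^{\alpha_{d_{n_d}}}\rho^{|A_d|}\bigl[(\mu_d+\epsilon)^{|A_d|-\alpha_{d_{n_d}}}-\mu_d^{|A_d|-\alpha_{d_{n_d}}}\bigr].
\end{equation*}
Applying the mean value inequality $(\mu_d+\epsilon)^j-\mu_d^j\leq j\epsilon(\mu_d+\epsilon)^{j-1}$ with $j=|A_d|-\alpha_{d_{n_d}}$ and using $\mu_d\leq 1$ yields the pointwise bound $\bm{|}P_\alpha-Q_\alpha\bm{|}_\rho\leq \epsilon\,|A_d|(\mu_d+\epsilon)^{|A_d|-1}\rho^{|A_d|}$.

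The core obstacle is the factor $|A_d|$, which grows linearly in the total degree and must be absorbed into a constant independent of $\alpha$. This is precisely where the standing assumption $d\geq 2$ enters: by $(\ref{modulus})$ it forces $\mu_d<\mu_1=1$, and for $\epsilon$ sufficiently small (depending only on $\Lambda$) one has $\mu_d+\epsilon<1$, so that
\begin{equation*}
\tilde C:=\sup_{k\geq 1}k(\mu_d+\epsilon)^{k-1}
\end{equation*}
is finite and depends only on $\Lambda$. With this $\tilde C$ the bound becomes $\bm{|}P_\alpha-Q_\alpha\bm{|}_\rho\leq \epsilon\tilde C\rho^{|A_d|}$; multiplying by the non-Jordan contribution $\prod_{j\neq d}r^{|A_j|}$, weighting by $|\varphi_\alpha|$ and summing over $\alpha$ collapses the right-hand side to $\epsilon\tilde C\bm{|}\varphi\bm{|}_{r,\rho}$, proving the claimed inequality $\bm{|}\mathcal{R}^{\epsilon}\varphi\bm{|}_{r,\rho}\leq \epsilon\tilde C\bm{|}\varphi\bm{|}_{r,\rho}$.
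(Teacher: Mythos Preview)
Your proof is correct, but it takes a different route from the paper's. The paper writes $\mathcal{R}^{\epsilon}\varphi$ via the integral Taylor formula $\int_0^1 D\varphi(\Lambda z+t\epsilon Nz)\cdot(\epsilon Nz)\,dt$, decomposes $\varphi$ into homogeneous pieces $\varphi_k^i$ of total degree $k$, and then combines the polydisc Cauchy estimate $\bigl|\partial_{z_{d_j}}\varphi_k^i\bigr|_{r,(\mu_d+\epsilon)\rho}\leq \frac{k}{(\mu_d+\epsilon)\rho}\,|\varphi_k^i|_{r,(\mu_d+\epsilon)\rho}$ with the contraction $(\Lambda+t\epsilon N)B_{r,\rho}\subset B_{r,(\mu_d+\epsilon)\rho}$; this produces the factor $(n_d-1)\,k(\mu_d+\epsilon)^{k-1}$, which is then bounded uniformly in $k$ by maximising $x\mapsto x(\mu_d+\epsilon)^{x-1}$. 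You instead work monomial by monomial: the combinatorial observation that the binomial expansion of $P_\alpha$ produces pairwise distinct monomials lets you compute $\bm{|}P_\alpha-Q_\alpha\bm{|}_\rho$ \emph{exactly} as $\rho^{|A_d|}\mu_d^{\alpha_{d_{n_d}}}\bigl[(\mu_d+\epsilon)^{|A_d|-\alpha_{d_{n_d}}}-\mu_d^{|A_d|-\alpha_{d_{n_d}}}\bigr]$, after which the mean value inequality and $\mu_d\leq\mu_d+\epsilon$ give the same governing quantity $k(\mu_d+\epsilon)^{k-1}$, now with $k=|A_d|$ rather than the full degree $|\alpha|$. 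Your argument is more elementary (no Cauchy estimates, no integral representation) and slightly sharper, since only the Jordan-block degree enters; the paper's Taylor-formula approach is more conceptual and adapts with less bookkeeping to several Jordan blocks.

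One cosmetic point: your constant $\tilde C=\sup_{k\ge 1}k(\mu_d+\epsilon)^{k-1}$ as written depends on $\epsilon$, not only on $\Lambda$. To match the statement, restrict to $\epsilon<(1-\mu_d)/2$ and replace $\mu_d+\epsilon$ by $a:=(1+\mu_d)/2$ in the supremum (exactly as the paper does); then $\tilde C=\sup_{k\ge 1}k\,a^{k-1}$ depends only on $\mu_d$, hence only on $\Lambda$.
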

\begin{proof}
First suppose there is only one single Jordan block in $(\ref{Jn})$. Then by Taylor formula we have
\begin{equation}\label{eqR}
  \mathcal{R}^{\epsilon}(\varphi)(z)= \varphi(\Lambda z+\epsilon Nz)-\varphi(\Lambda z )= \int_{0}^{1} D\varphi(\Lambda z+t\epsilon Nz)\cdot (\epsilon Nz) dt .
\end{equation}
Let us write $\varphi=(\varphi^{i})_{i=1,\cdots,n}$ with $\varphi^{i}=\sum_{k\geq 2}\varphi^{i}_k$ where $\varphi^{i}_k\in p_n^k$. Since $N$ is nilpotent, only those derivatives of the variables corresponding to $\Lambda_d$ make contributions to the integral in (\ref{eqR}). Since $|\lambda_{j}|< 1$, $j\in \mathcal{I}_d$, we may choose $\epsilon$ sufficiently small such that $|\lambda_j|+\epsilon < 1, j\in\mathcal{I}_d$. For all $0<t<1$ we have
\begin{equation*}
(\Lambda+t\epsilon N)B_{r,\rho}\subset \Lambda^{\epsilon}({B_{r,\rho}})\subset B_{r,(\mu_d+\epsilon) \rho}.
\end{equation*}
Applying the Cauchy estimates for polydisc norm (\ref{cauchyest}) to each partial derivative $\frac{\partial \varphi_k^i}{\partial z_j},j\in\mathcal{I}_d$ from $B_{r,(\mu_d+\epsilon)\rho}$ to $B_{r,\rho}$, for all $i=1,2,\cdots,n$ and $k\geq 2$~:
\begin{align}
\nonumber\bm{|}\mathcal{R}^{\epsilon}\varphi^i_k\bm{|}_{r,\rho}
&= \left|\int_{0}^{1} D\varphi^i_k(\Lambda z+t\epsilon Nz)\cdot (\epsilon Nz) dt \right|_{r,\rho} \\
\nonumber&\leq \left|\int_{0}^{1}\sum_{j=1}^{n_d-1}\frac{\partial \varphi^i_k}{\partial z_{d_j}}(\Lambda z+t\epsilon Nz) \cdot \epsilon z_{d_{j+1}}  dt\right|_{r,\rho}\\
&\leq \sum_{j=1}^{n_d-1}\left|\frac{\partial\varphi^i_k(\Lambda^{\epsilon})}{\partial z_{d_j}}\right|_{r,\rho} \cdot \left|\epsilon z_{d_{j+1}}\right|_{r,\rho}\leq \sum_{j=1}^{n_d-1}\left|\frac{\partial \varphi^i_k}{\partial z_{d_j}}\right|_{r,(\mu_d+\epsilon)\rho} \cdot \epsilon \rho \label{Rphi}\\
\nonumber&\leq \epsilon(n_d-1) \rho\frac{k}{(\mu_d+\epsilon)\rho}|\varphi^i_k|_{r,(\mu_d+\epsilon)\rho} \\
\nonumber&\leq \epsilon(n_d-1) \frac{k}{\mu_d+\epsilon} \left[\frac{(\mu_d+\epsilon)\rho}{\rho}\right]^k|\varphi^i_k|_{r,\rho}\\
\nonumber&= \epsilon(n_d-1)(\mu_d+\epsilon)^{k-1}k\cdot |\varphi^i_k|_{r,\rho}.
\end{align}
For a constant $a, 0<a<1$, we consider a function $h_a(x)=\ln(a^{x-1}x)=(x-1)\ln a+\ln x$, its derivative is $h_a'(x)=\ln a+ \frac{1}{x}$, so $h_a(x)$ has maximum at $x_0=-\frac{1}{\ln a}$. Let $k_0$ be the one who satisfies $h_a(k_0)=\max \{h_a([x_0]),h_a([x_0]+1),h_a(2)\}$, thus for all $k\geq 2$ we have $h_a(k)\leq h_a(k_0)$. Now if we suppose
\begin{equation}\label{e1}
\epsilon< \frac{1-\mu_d}{2},
\end{equation}
then for $\mu_d+\epsilon <\frac{1+\mu_d}{2}:=a<1$, we have for all $k\geq 2$~:
\begin{equation*}
(\mu_d+\epsilon)^{k-1}k \leq \exp{h_a(k_0)},
\end{equation*}
where $k_0$ and $a$ depend only on $\Lambda$. By this estimate and observing that $\varphi^{i}=\sum_{k\geq 2}\varphi^{i}_k$, for each $i=1,2,\cdots,n$ we obtain
\begin{equation}\label{tildeC}
\bm{|}\mathcal{R}^{\epsilon}\varphi^i\bm{|}_{r,\rho}\leq \epsilon \tilde{C}|\varphi^i|_{r,\rho},
\end{equation}
where the constant $\tilde{C}=\tilde{C}(n_d,\lambda_d)$ which depends only on $n_d$ and $\lambda_d$. With the estimates above, it can be seen easily that, in case of several Jordan blocks, we consider in $(\ref{B_rrho})$ with $|z_i|<r$ corresponding to variables of diagonal part and $|z_j|<\rho$ corresponding to variables in Jordan blocks. Then there will be several summations in equation $(\ref{Rphi})$, and $\tilde{C}$ will depend on the dimension and eigenvalues of those corresponding blocks. Thus the lemma is proved.
\end{proof}
\begin{Remark}
Although $\varphi$ does not need to be restricted on the Poincar\'e slice in the previous lemma, we shall only use this estimate on Poincar\'e slice.
\end{Remark}

\section{Estimate of the solution to the homological equation}\label{estofL}
In this section, for the convenience of convergence proof we will consider the ball $B_r=B_{r,r}$. Our main purpose is to solve and estimate the solution of the homological equation $(\ref{Lphi_g})$. According to invariance property $\romannumeral2)$ of $(S,P)-$decomposition, we consider it along the two separate slices~:
\begin{equation}\label{homode}
\begin{aligned}
\mathcal{L}(\varphi_{\scriptscriptstyle P}+\varphi_{\scriptscriptstyle S})= g_{\scriptscriptstyle P}+g_{\scriptscriptstyle S} \Rightarrow
\mathcal{L}(\varphi_{\scriptscriptstyle P})=g_{\scriptscriptstyle P},\mathcal{L}(\varphi_{\scriptscriptstyle S})=g_{\scriptscriptstyle S}.
\end{aligned}
\end{equation}
Suppose that the linear part $(\ref{Jn})$ satisfies the corresponding $QR-$Diophantine conditions, our main goal is to prove the following proposition :
\begin{Proposition}\label{ProJn}
Suppose $\epsilon$ is sufficiently small and $\Lambda^{\epsilon}$ satisfies the conditions in Theorem \ref{mainth1}, then there exists a constant $C_2$ which depends on $\kappa,\sigma,\Lambda,n,C_0$ so that for a holomorphic map $g_{\geq 2}\in {\mathcal{H}(B_r)}^n$, there is a unique solution $\varphi\in {\mathcal{H}(B_{r-\delta})}^n$ of the homological equation $(\ref{Lphi_g})$ which satisfies the following estimate~:
\begin{equation}\label{Est_L}
\begin{aligned}
\|\varphi\|_{r-\delta}\leq C_2\cdot \delta^{-(\vartheta+n)}\|g\|_r,\quad \text{for all}\quad 0<\delta< r\leq1,
\end{aligned}
\end{equation}
where $\vartheta$ is the positive constant in equation $(\ref{EstonS})$ which depends only on $\kappa,\sigma,n$.
\end{Proposition}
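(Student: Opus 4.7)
The plan is to exploit the $(S,P)$-decomposition together with the invariance Lemma \ref{invariance} to split the homological equation $(\ref{Lphi_g})$ into two independent equations, $\mathcal{L}(\varphi_{\scriptscriptstyle S}) = g_{\scriptscriptstyle S}$ and $\mathcal{L}(\varphi_{\scriptscriptstyle P}) = g_{\scriptscriptstyle P}$, handle each slice with the tools appropriate to it, and then reassemble the bounds. The invariance of both slices under $\mathcal{L}^{\pm 1}$ also guarantees that any (formal) solution is automatically decomposed this way, which delivers the uniqueness part of the statement for free.

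For the Siegel slice, the arithmetic hypotheses $(\ref{small_A1})$--$(\ref{small_A2})$ in case (I), together with the $\mathrm{QR}$-Diophantine condition $(\ref{def_qr_Dio})$ in case (II), bound $|\Omega_\alpha^{-1}|$ coefficient-wise by a polynomial $|\alpha|^{\sigma}$ for every monomial index $\alpha \in S$. Inverting $(\Omega - \epsilon N)|_{\scriptscriptstyle S}$ via formula $(\ref{inverse-d})$ amplifies the denominator by at most a finite power of $|\alpha|^{\sigma}$ (the power is controlled by the nilpotency order of $N$), and a standard Rüssmann-type bookkeeping converts this monomial-wise loss into a single analytic loss of radius, giving an estimate of the form $(\ref{EstonS})$ with the exponent $\vartheta$ depending only on $\kappa, \sigma, n$.

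For the Poincaré slice, the very definitions $(\ref{decomp_nqr})$ and $(\ref{decomposition})$ are crafted so that $|\lambda^\alpha|$ stays away from every $|\lambda_j|$ whenever $\alpha \in P$; consequently there is a constant $c > 0$ depending only on $\Lambda$ with $|\lambda^\alpha - \lambda_j| \geq c$ for all $\alpha \in P$ and all $j$. Combined with the nilpotency of $N$ and $(\ref{inverse-d})$, this yields a uniform bound for $(\Omega - \epsilon N)^{-1}|_{\scriptscriptstyle P}$, independent of $\alpha$ and of $r$. I then invoke the fixed-point representation $(\ref{inverse-phi})$ together with Lemma \ref{lemmaR}: once $\epsilon$ is chosen small enough that the operator norm of $(\Omega - \epsilon N)^{-1} \mathcal{R}^\epsilon$ on the $|\cdot|_r$-polydisc norm is strictly less than $1/2$, its Neumann series converges on the closed subspace of $H(B_r)^n$ whose Taylor monomials lie in $P$, and produces $\varphi_{\scriptscriptstyle P}$ with $|\varphi_{\scriptscriptstyle P}|_r \leq C |g_{\scriptscriptstyle P}|_r$. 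Notice that this estimate involves no loss in $\delta$.

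To finish, I convert $\|g\|_r$ to the polydisc norm $|g|_r$ via $(\ref{polyineq})$ at the cost of a factor $(r/\delta)^n$, apply the Poincaré and Siegel bounds to $|g_{\scriptscriptstyle P}|_r$ and $|g_{\scriptscriptstyle S}|_r$ (both dominated by $|g|_r$ since the decomposition is term-by-term), and then pass back from polydisc to sup norm, which is free. Adding the two contributions and using $r \leq 1$ produces $(\ref{Est_L})$. The principal obstacle is the Siegel side: one has to track small divisors simultaneously at every quasi-resonance level $3 \leq i \leq m$ and verify that, no matter how deep the cascade of quasi-resonances goes, the analytic loss aggregates into a single power $\delta^{-\vartheta}$ rather than compounding. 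The Poincaré estimate is conceptually lighter but still depends on calibrating $\epsilon$ against the uniform lower bound $c$; fortunately $c$, like $\tilde C$ in Lemma \ref{lemmaR}, depends only on $\Lambda$, so a single choice of $\epsilon$ serves both slices.
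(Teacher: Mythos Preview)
Your overall architecture matches the paper's: split via the $(S,P)$-decomposition, use Lemma~\ref{invariance} for invariance and uniqueness, get a uniform bound on $P$ via the Neumann series with Lemma~\ref{lemmaR}, get a small-divisor bound with radius loss on $S$, and reassemble with $(\ref{polyineq})$. The Poincar\'e side and the final assembly are essentially correct (minor point: the uniform lower bound $c$ on $P$ also depends on $\kappa$, since $P$ itself does).

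There is, however, a real gap on the Siegel side. You invert only $(\Omega-\epsilon N)|_{S}$ via $(\ref{inverse-d})$ and then invoke ``R\"ussmann-type bookkeeping'' to reach $(\ref{EstonS})$. But the homological operator is $\mathcal{L}=(\Omega-\epsilon N)+\mathcal{R}^{\epsilon}$, and you never say what happens to $\mathcal{R}^{\epsilon}$ on $S$. The Neumann-series argument you use on $P$ fails on $S$: because of small divisors, $(\Omega-\epsilon N)^{-1}$ is unbounded there, so $(\Omega-\epsilon N)^{-1}\mathcal{R}^{\epsilon}$ is not a contraction for any fixed $\epsilon$. The paper's missing ingredient is Lemma~\ref{nilR}: on each Siegel slice $S_{i,\kappa}$ the operator $\mathcal{R}^{\epsilon}$ strictly lowers a lexicographic order on the Jordan-block variables and is therefore \emph{nilpotent}, with nilpotency index $\eta=\eta(\kappa,n)$. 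This is what makes
\[
\varphi_{S}=\sum_{j=0}^{\eta-1}(-1)^{j}(\Omega-\epsilon N)^{-(j+1)}(\mathcal{R}^{\epsilon})^{j}g_{S}
\]
a finite sum and produces the exponent $\vartheta=\max\{\sigma n,\sigma n^{2},\sigma\eta n\}$ in $(\ref{EstonS})$. Your claim that ``the power is controlled by the nilpotency order of $N$'' accounts only for the matrix $N$ in $(\ref{inverse-d})$, not for the operator $\mathcal{R}^{\epsilon}$; without Lemma~\ref{nilR} (or an equivalent mechanism) the Siegel estimate does not close.
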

We need to state some basic lemmas first for its proof. First, on the Poincar\'e slice we have a formal solution and we estimate it.
\begin{Lemma}\label{lemmaP}
The restriction to the Poincar\'e slice of the homological equation $(\ref{homode})$ has a unique solution $\varphi_{\scriptscriptstyle P}\in H(B_r)^n$ w.r.t 
any given $g_{\scriptscriptstyle P}\in H(B_r)^n$. Suppose $\epsilon$ is sufficiently small, then we have the following estimate~:
\begin{equation}\label{PE}
\bm{|}\varphi_{\scriptscriptstyle P}\bm{|}_r \leq C_P \bm{|}g_{\scriptscriptstyle P}\bm{|}_r ,
\end{equation}
where $C_P=C(\Lambda,n,\kappa)$ depends only on $\Lambda,n$ and $\kappa$.
\end{Lemma}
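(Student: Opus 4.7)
The plan is to invert $\mathcal{L}$ restricted to the Poincar\'e slice by a Neumann-series / fixed-point argument, combining the splitting $\mathcal{L} = (\Omega-\epsilon N) + \mathcal{R}^\epsilon$ of $(\ref{decomLphi_g})$, the invariance of $P$ from Lemma $\ref{invariance}$, and the remainder estimate of Lemma $\ref{lemmaR}$. The design of the $(S,P)$-decomposition is precisely tailored so that $P$ is small-divisor free, which will make $(\Omega-\epsilon N)^{-1}$ uniformly bounded on $P$.

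The first and most substantive step is the arithmetic claim: there exists $c = c(\Lambda,\kappa) > 0$ such that $|\lambda^\alpha - \lambda_j|\geq c$ for every $\alpha\in P$ and every $j\in\{1,\ldots,n\}$. Writing $j\in \mathcal{I}_i$ and using $|\lambda^\alpha - \lambda_j|\geq \bigl||\lambda^\alpha| - \mu_i\bigr|$, it is enough to bound $|\lambda^\alpha|$ away from $\{\mu_1,\ldots,\mu_m\}$. The key observation is that the set $\Sigma := \{\prod_{l=2}^{m}\mu_l^{a_l} : (a_l)\in \mathbb{N}^{m-1}\}$ is discrete in $(0,1]$ (any positive lower bound is attained by only finitely many tuples, because $\mu_l<1$ for $l\geq 2$), while the exclusion of $\alpha$ from every $S_i$ forces $|\lambda^\alpha|\neq \mu_i$ for all $i$: for $i=1$ this follows from $\sum_{l\geq 2}|A_l|\geq 2$, hence $|\lambda^\alpha|\leq\mu_2^2 < 1$; for $i \geq 2$, any equality $|\lambda^\alpha| = \mu_i$ would realize a quasi-resonance, placing $\alpha$ in $S_i$ and contradicting $\alpha\in P$. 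Discreteness of $\Sigma$ then supplies the uniform distance $c$.

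With this lower bound in hand, formula $(\ref{inverse-d})$ bounds $\|(\Omega_\alpha - \epsilon N)^{-1}\|\leq M = M(\Lambda,n,\kappa)$ uniformly in $\alpha\in P$ for all $\epsilon$ small. Since $\bm{|}\cdot\bm{|}_r$ is a weighted $\ell^1$-norm on Taylor coefficients, this pointwise bound passes to series: $\bm{|}(\Omega - \epsilon N)^{-1}h\bm{|}_r \leq nM\,\bm{|}h\bm{|}_r$ for every $h \in H(B_r)^n$ whose Taylor support lies in $P$. Combining with Lemma $\ref{lemmaR}$, the restriction of $(\ref{inverse-phi})$ to $P$ becomes the fixed-point equation
\begin{equation*}
\bigl(I + (\Omega-\epsilon N)^{-1}\mathcal{R}^\epsilon\bigr)\varphi_{\scriptscriptstyle P} = (\Omega-\epsilon N)^{-1} g_{\scriptscriptstyle P},
\end{equation*}
whose linear perturbation has operator norm at most $nM\epsilon\tilde{C}$. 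Choosing $\epsilon$ so small that $nM\epsilon\tilde{C}<1/2$, a Neumann series yields existence, uniqueness of $\varphi_{\scriptscriptstyle P}\in H(B_r)^n$, and $\bm{|}\varphi_{\scriptscriptstyle P}\bm{|}_r \leq 2nM\,\bm{|}g_{\scriptscriptstyle P}\bm{|}_r$, proving $(\ref{PE})$ with $C_P = 2nM$.

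The main obstacle is the arithmetic/discreteness argument: it requires a careful case analysis of the decomposition $(\ref{s_1s_2})$--$(\ref{s_ik})$ to verify that removing every quasi-resonance configuration from $\mathbb{Z}_+^n(2)$ genuinely isolates $|\lambda^\alpha|$ from the spectrum of moduli, uniformly in $\alpha$. Once this is secured, the Neumann inversion is a formal consequence of the nilpotency expansion $(\ref{inverse-d})$ together with Lemmas $\ref{invariance}$ and $\ref{lemmaR}$.
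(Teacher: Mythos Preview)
Your proposal is correct and follows essentially the same route as the paper: establish a uniform lower bound $|\lambda^{\alpha}-\lambda_j|\geq c(\Lambda,\kappa)>0$ for $\alpha\in P$, use the nilpotent expansion $(\ref{inverse-d})$ to bound $(\Omega_{\alpha}-\epsilon N)^{-1}$ uniformly, and then invert $I+(\Omega-\epsilon N)^{-1}\mathcal{R}^{\epsilon}$ by combining Lemma~$\ref{lemmaR}$ with a Neumann series. Your discreteness argument for the arithmetic step is a slightly cleaner packaging of what the paper does (the paper observes $|\lambda^{\alpha}|\to 0$ as $\sum_{j\notin\mathcal{I}_1}\alpha_j\to\infty$ and then takes a minimum over the finitely many remaining moduli), and one should note that when $|\lambda^{\alpha}|=\mu_i$ with $i\geq 3$ the index $\alpha$ may land in $S_2$ rather than $S_i$---but either way $\alpha\in S$, so the conclusion stands.
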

\begin{proof}
First we suppose there is only one single Jordan block. The restriction of the homological equation (\ref{decomLphi_g}) to the Poincar\'e slice is~:
\begin{equation}\label{LonPoincare}
\mathcal{L}(\varphi_{\scriptscriptstyle P})
=(\Omega-\epsilon N)\varphi_{\scriptscriptstyle P}+ \mathcal{R}^{\epsilon}\varphi_{\scriptscriptstyle P}
= g_{\scriptscriptstyle P}.
\end{equation}
Throughout this proof, let $\alpha\in P\subset \mathbb{Z}_+^n(2)$. We claim that for each $i=1,2,\cdots,n$, there exists a constant $\gamma_i>0$ depending on $\Lambda$ and $\kappa$ such that
\begin{equation}\label{claim_P}
|\lambda^{\alpha}-\lambda_i|\geq \gamma_i.
\end{equation}
In fact, by the $(S,P)-$decomposition $(\ref{decomp_nqr})$ and $(\ref{decomposition})$, we have~:
\begin{equation}\label{lambda_P}
|\lambda^{\alpha}|\neq |\lambda_i|~\text{for all}~ i=1,2,\cdots,n.
\end{equation}
Equation $(\ref{modulus})$ indicates that $|\lambda^{\alpha}|$ tends to zero as $|\sum_{j\notin \mathcal{I}_1}\alpha_j|$ goes to infinity. Combining the non-resonant condition
\begin{equation}\label{lambda_P2}
|\lambda^{\alpha}-\lambda_i|\neq 0 ~\text{for all}~i=1,2,\cdots,n,
\end{equation}
and $(\ref{lambda_P}),$ there exists an integer $M_i$ such that we have $|\lambda^{\alpha}-\lambda_i|\geq \frac{1}{2}|\lambda_i|$ when $|\alpha|\geq M_i$. Denote
\begin{equation*}
\gamma_i :=\min\{\inf_{|\alpha|<M_i}|\lambda^{\alpha}-\lambda_i|,\frac{1}{2}|\lambda_i|\},
\end{equation*}
by $(\ref{lambda_P2})$ we have $\gamma_i>0$, thus the claim $(\ref{claim_P})$ is proved.

The $n\times n-$dimension diagonal matrix $\Omega_{\alpha}$ on Poincar\'e slice is~:
\begin{equation*}
\Omega_{\alpha}= \operatorname{diag} \{\lambda^{\alpha}-\lambda_1,\lambda^{\alpha}-\lambda_2,\cdots,\lambda^{\alpha}-\lambda_n\}.
\end{equation*}
Denote
\begin{equation}\label{gammap}
\gamma_p :=\max_{1\leq i\leq n}\gamma_i^{-1},
\end{equation}
by $(\ref{claim_P})$ we have~:
\begin{equation*}
|\Omega^{-1}_{\alpha}|\leq \max_{1\leq i\leq n}\{|\lambda^{\alpha}-\lambda_1|^{-1},|\lambda^{\alpha}-\lambda_2|^{-1},\cdots,|\lambda^{\alpha}-\lambda_n|^{-1}\} \leq \max_{1\leq i\leq n}\gamma_i^{-1} =\gamma_p .
\end{equation*}
According to the equation (\ref{inverse-d}), we suppose
\begin{equation}\label{e2}
\epsilon \gamma_p\leq \frac{1}{2}.
\end{equation}
Then for all $\alpha\in P$ we have :
\begin{equation}\label{OmegaP}
\begin{aligned}
|(\Omega_{\alpha}-\epsilon N)^{-1}|=\left|\sum_{s=0}^{n_d-1}\Omega_{\alpha}^{-s-1}(\epsilon N)^s\right| \leq
\sum_{s=0}^{n_d-1}|\Omega_{\alpha}^{-1}|^{s+1}\epsilon^s \leq 2\gamma_p(1-2^{-n_d}) :=\Gamma(\Lambda,\kappa),
\end{aligned}
\end{equation}
For a given $g_{\scriptscriptstyle P}\in H(B_r)^n$ on the Poincar\'e slice, by $(S,P)-$decomposition $(\ref{decomposition})$ and $(\ref{matrix_mod})$, using (\ref{OmegaP}) on $P$ we have~:
\begin{equation}\label{Est_Omega_P}
\begin{aligned}
\left|(\Omega-\epsilon N)^{-1}g_{\scriptscriptstyle P}\right|_r
&=\left|(\Omega-\epsilon N)^{-1}\sum_{\alpha\in P}g_{\alpha}z^{\alpha}\right|_r\leq \sum_{\alpha\in P}|g_{\alpha}|\left|(\Omega_{\alpha}-\epsilon N)^{-1}\right|r^{|\alpha|}\\
&\leq \Gamma(\Lambda,\kappa)\sum_{\alpha\in P}|g_{\alpha}|r^{|\alpha|}= \Gamma(\Lambda,\kappa) |g_{\scriptscriptstyle P}|_r.
\end{aligned}
\end{equation}
The equation (\ref{LonPoincare}) reads as~:
\begin{equation*}
\left(I+(\Omega-\epsilon N)^{-1}\mathcal{R}^{\epsilon}\right)\varphi_{\scriptscriptstyle P}= (\Omega-\epsilon N)^{-1}g_{\scriptscriptstyle P}.
\end{equation*}
By the estimates above and Lemma \ref{lemmaR}, if we choose $\epsilon$ sufficiently small such that
$|(\Omega-\epsilon N)^{-1}\mathcal{R}^{\epsilon}|<1$, then the operator $I+(\Omega-\epsilon N)^{-1}\mathcal{R}^{\epsilon}$ is invertible at a formal level and we have :
\begin{equation*}
\varphi_{\scriptscriptstyle P}= \left(I+(\Omega-\epsilon N)^{-1}\mathcal{R}^{\epsilon}\right)^{-1}(\Omega-\epsilon N)^{-1}g_{\scriptscriptstyle P},
\end{equation*}
thus the existence is proved.

Let us estimate the solution. We rewrite (\ref{LonPoincare}) as~:
\begin{equation*}
\varphi_{\scriptscriptstyle P}=(\Omega-\epsilon N)^{-1} g_{\scriptscriptstyle P}- (\Omega-\epsilon N)^{-1}
\mathcal{R}^{\epsilon}\varphi_{\scriptscriptstyle P},
\end{equation*}
taking the polydisc norm of the ball $B_r$ on both sides and by Lemma \ref{lemmaR} we obtain~:
\begin{equation*}
\begin{aligned}
\bm{|}\varphi_{\scriptscriptstyle P}\bm{|}_r
&=\left|(\Omega-\epsilon N)^{-1}g_{\scriptscriptstyle P}-(\Omega-\epsilon N)^{-1}\mathcal{R}^{\epsilon}\varphi_{\scriptscriptstyle P}\right|_r \\
&\leq \left|(\Omega-\epsilon N)^{-1}g_{\scriptscriptstyle P}\right|_r
+\left|(\Omega-\epsilon N)^{-1}\mathcal{R}^{\epsilon}\varphi_{\scriptscriptstyle P}\right|_r\\
&\leq \Gamma(\Lambda,\kappa) \bm{|}g_{\scriptscriptstyle P}\bm{|}_r
+ \Gamma(\Lambda,\kappa) \left|\mathcal{R}^{\epsilon}\varphi_{\scriptscriptstyle P}\right|_r\\
&\leq \Gamma(\Lambda,\kappa) \left|g_{\scriptscriptstyle P}\right|_r +
\epsilon \Gamma(\Lambda,\kappa)\tilde{C} \left|\varphi_{\scriptscriptstyle P}\right|_r,
\end{aligned}
\end{equation*}
where $\tilde{C}=\tilde{C}(n_d,\lambda_d)$ comes from (\ref{est_remainder}) which depends only on $\Lambda$.

To summarize, on the Poincar\'{e} slice, if $\epsilon$ is sufficiently small such that
\begin{equation}\label{e3}
\epsilon \Gamma(\Lambda,\kappa)\tilde{C}<1
\end{equation}
and (\ref{e2}) are satisfied, then there is a constant $C(\Lambda,\kappa)=\Gamma(\Lambda,\kappa)\tilde{C}$ which depends on $\Lambda$ and $\kappa$ such that~:
\begin{equation*}
\bm{|}\varphi_{\scriptscriptstyle P}\bm{|}_r \leq \frac{\Gamma(\Lambda,\kappa)}{1-\epsilon\cdot C(\Lambda,\kappa)} \bm{|}g_{\scriptscriptstyle P}\bm{|}_r .
\end{equation*}
If there is more than one Jordan block in $\Lambda^{\epsilon}$, then the estimate in $(\ref{OmegaP})$ will depend on the maximum of $n_d$ where $\mathcal{I}_d$ corresponds to Jordan blocks. Thus the lemma is proved.
\end{proof}
\begin{Remark}
We need to mention that this idea can not be applied on Siegel slice, because we will encounter small divisor there. In fact, on Siegel slice the estimate of $|(\Omega-\epsilon N)^{-1}|$ in $(\ref{OmegaP})$ is related to the multi-indices $\alpha$, which tends to infinity as $|\alpha|$ goes to infinity. So we can not choose $\epsilon$ sufficiently small to satisfy $(\ref{e3})$.
\end{Remark}
Before we start to estimate the solution of the homological equation restricted on the Siegel slice, we need to introduce the lexicographic order for monomials.
\begin{Definition}\label{def_lexico}
Given two exponent vectors $\alpha=(\alpha_1,\alpha_2,\cdots,\alpha_n),\beta=(\beta_1,\beta_2,\cdots,\beta_n)$. One has
$$(\alpha_1,\alpha_2,\cdots,\alpha_n)\prec(\beta_1,\beta_2,\cdots,\beta_n)$$
if either $|\alpha|<|\beta|,$ or $|\alpha|=|\beta|$ but $\alpha_i<\beta_i$ for the smallest $i$ for which $\alpha_i\neq\beta_i$. Then naturally we have a  relation order ``less than: $\prec$'' in $\mathbb{Z}^n_+(2)$. The {\it lexicographic order} of a monomial $z_1^{\alpha_1}\cdots z_n^{\alpha_n}$ is defined as~:
$$LO(z_1^{\alpha_1}\cdots z_n^{\alpha_n}) :=\alpha =(\alpha_1,\cdots,\alpha_n).$$
For any finite subset $W\subset \mathbb{Z}^n_+(2)$ and a power series $f_W(z)=\sum\limits_{\alpha\in W}f_{\alpha}z^{\alpha},$ define
\begin{equation*}
LO(f_W) :=\max_{\alpha\in W} LO(z^{\alpha}),
\end{equation*}
where the maximum is taken under the relation order $\prec$.
\end{Definition}
With this definition, we have the following lemma~:
\begin{Lemma}\label{nilR}
On Siegel slice $(\ref{decomposition})$, the operator $\mathcal{R}^{\epsilon}$ is nilpotent. i.e., there exists a finite integer $\eta=\eta(\kappa,n)$ which depends only on $\kappa$ and $n$ such that~:
\begin{equation*}
(\mathcal{R}^{\epsilon})^{\eta}g_{\scriptscriptstyle S}=0~~ ,~~ \text{for all}~~g_{\scriptscriptstyle S}\in H(B_r)^n.
\end{equation*}
\end{Lemma}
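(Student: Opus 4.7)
The plan is to prove nilpotency by a strict monotonicity argument in the lexicographic order of Definition~\ref{def_lexico}. The key input is a refinement of the binomial expansion performed in the proof of Lemma~\ref{invariance}: for any monomial $z^\alpha$ with $\alpha\in S$, expanding each factor $(\lambda_{t_l}z_{t_l}+\epsilon z_{t_{l+1}})^{\alpha_{t_l}}$ shows that $\mathcal R^\epsilon(z^\alpha)$ is a finite $\mathbb C$-linear combination of monomials $z^{\tilde\alpha}$ satisfying $|\tilde A_j|=|A_j|$ for every $j$ and $\tilde\alpha\prec\alpha$ strictly. The strict inequality holds because every nontrivial Jordan shift strictly lowers the exponent at some $z_{t_l}$; the preservation of all $|A_j|$ then forces the first coordinate at which $\alpha$ and $\tilde\alpha$ disagree to lie inside the shifted block itself, where the new exponent really drops.

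The second step is to exploit the uniform degree bounds that membership in $S$ imposes. On $S_1$ one has $|A_j|=0$ for all $j\ge 2$; on $S_2$ exactly one $|A_j|$ equals $1$; on each $S_{i,\kappa}$ with $\kappa\in QR_i$ one has $|A_j|=\kappa_j^{(i)}$ for $2\le j\le i-1$ and $|A_j|=0$ for $j\ge i$. Since Remark~1.2 gives only finitely many quasi-resonance tuples, the constant
\[
\bar\kappa := \max\!\Bigl\{1,\ \max_{i}\max_{\kappa\in QR_i,\,2\le j\le i-1}\kappa_j^{(i)}\Bigr\}
\]
is finite and uniform over $S$, and $|A_j|\le\bar\kappa$ for every $\alpha\in S$ and every $j\ge 2$. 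Moreover, each Jordan block of $\Lambda^\epsilon$ lies inside a single equal-modulus class $\mathcal I_j$, and the standing hypotheses of Theorem~\ref{mainth1} force the unit-modulus class $\mathcal I_1$ to be semi-simple (otherwise Yoccoz's obstruction in \cite{Yoc95Asterique} already rules out linearizability). Hence $N$ has no nonzero entries in positions indexed by $\mathcal I_1$, so $\mathcal R^\epsilon$ leaves $A_1$ untouched and shuffles mass only inside blocks of index $j\ge 2$.

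Combining the two observations, for any monomial $z^\alpha\in S$ the iterates $(\mathcal R^\epsilon)^k z^\alpha$ are supported on the finite set $\{z^\beta : B_1=A_1,\ |B_j|=|A_j|\text{ for }j\ge 2\}$, whose projection to the $(B_2,\ldots,B_m)$-coordinates has cardinality at most
\[
M := \prod_{j=2}^m \binom{\bar\kappa+n_j-1}{n_j-1},
\]
which depends only on $\bar\kappa$ and $n$. Because each application of $\mathcal R^\epsilon$ strictly decreases the lex order of every monomial produced, and a strictly descending chain in a finite poset of cardinality $M$ has length at most $M$, we obtain $(\mathcal R^\epsilon)^M z^\alpha = 0$. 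By $\mathbb C$-linearity, $\eta:=M=\eta(\kappa,n)$ annihilates every $g_S\in H(B_r)^n$ supported on $S$.

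The step I expect to require the most care is the first one: establishing that the lex-decrease is \emph{global}, not merely internal to each Jordan block, when $\mathcal R^\epsilon$ produces shifts across several blocks simultaneously. Once the preservation of every $|A_j|$ is used to pin down the first differing coordinate inside a shifted block, the remaining finite-poset argument is essentially bookkeeping.
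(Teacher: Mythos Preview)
Your approach is essentially the paper's own: both arguments rest on the observation that $\mathcal R^\epsilon$ strictly lowers the lexicographic order of every surviving monomial while preserving each $|A_j|$, so that on the Siegel slice the orbit of a monomial is confined to a finite poset whose size is bounded uniformly in terms of $\kappa$ and $n$. The paper carries this out case by case on $S_1$, $S_2$, and each $S_{i,\kappa}$ (with explicit computations on $S_1,S_2$ and the lex-order expansion only on $S_{i,\kappa}$), whereas you package it into a single poset-cardinality bound $M=\prod_{j\ge 2}\binom{\bar\kappa+n_j-1}{n_j-1}$; the content is the same.

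One point to correct: your justification that ``the standing hypotheses of Theorem~\ref{mainth1} force $\mathcal I_1$ to be semi-simple (otherwise Yoccoz's obstruction \ldots\ rules out linearizability)'' is not valid. Yoccoz's result is an existence statement about \emph{some} non-linearizable germs with a unit-modulus Jordan block in dimension~$2$; it is not a blanket obstruction, and in any case invoking the conclusion of Theorem~\ref{mainth1} to constrain its hypotheses would be circular. The semi-simplicity of $\mathcal I_1$ is rather an implicit standing assumption of the paper: Section~\ref{estremain} explicitly takes the Jordan block to correspond to $\Lambda_d$ with $d\ge 2$, and the estimate $|\lambda_j|+\epsilon<1$ in Lemma~\ref{lemmaR} requires this. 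Without it, your argument (and the paper's) would indeed fail on $S_1$, since $|A_1|$ is unbounded there and the nilpotency order would depend on $|A_1|$. So simply cite the paper's convention $d\ge 2$ rather than Yoccoz.
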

\begin{proof}
First we consider the case of a single Jordan block $\Lambda_d$. Recall that each Siegel slice is $(\ref{s_ik})$, on $S_1$ and $S_2$ we have~:
\begin{equation*}
g_{\scriptscriptstyle S_1}(z)= \sum_{|A_1|=2}^{\infty}g_{\scriptscriptstyle A_1}Z_1^{A_1}~ ,~ g_{\scriptscriptstyle S_2}(z)= \sum_{|A_1|=1}^{\infty}\sum_{i=2}^{m}\sum_{|A_i|=1}g_{\scriptscriptstyle A_1,A_i}Z_1^{A_1}Z_i^{A_i} .
\end{equation*}
By $(\ref{R_phi})$ and direct calculation we obtain :
\begin{equation}\label{R_S1}
\mathcal{R}^{\epsilon}(g_{\scriptscriptstyle S_1})=0
\end{equation}
For any $i\neq d$ we have $\mathcal{R}^{\epsilon}(Z_1^{A_1}Z_i^{A_i})=0$. Note that the operator $\mathcal{R}^{\epsilon}$ acts only on monomials, thus
\begin{equation*}
\mathcal{R}^{\epsilon}\left(g_{\scriptscriptstyle S_2}(z)\right)=\sum_{|A_1|=1}^{\infty}\sum_{|A_d|=1}g_{\scriptscriptstyle A_1,A_d}(\mathcal{R}^{\epsilon})(Z_1^{A_1}Z_d^{A_d}).
\end{equation*}
For each $j\in \mathcal{I}_d,j\neq n_d$ and $|A_j|=1$, by observing that $$\mathcal{R}^{\epsilon}(Z_1^{A_1}Z_j^{A_j})=\mathcal{R}^{\epsilon}(Z_1^{A_1}z_j)=\epsilon Z_1^{A_1}z_{j+1},$$
we have~:
\begin{equation}\label{R_S2}
(\mathcal{R}^{\epsilon})^{n_d}\left(g_{\scriptscriptstyle S_2}(z)\right)=\sum_{|A_1|=1}^{\infty}g_{\scriptscriptstyle A_1,A_{d_1}}\epsilon^{n_d-1}\mathcal{R}^{\epsilon}(Z_1^{A_1}z_{n_d})=0 .
\end{equation}

When there is a quasi-resonance $\kappa\in QR_i$, we have~:
\begin{equation}\label{g_S_ikappa}
g_{\scriptscriptstyle S_{i,\kappa}}(z)= \sum_{\alpha\in S_{i,\kappa}} g_{\scriptscriptstyle \alpha}Z_1^{A_1}Z_2^{A_2}\cdots Z_{i-1}^{A_{i-1}}.
\end{equation}
For each quasi$-$resonance, we consider the monomial $Z_1^{A_1}Z_2^{A_2}\cdots Z_{i-1}^{A_{i-1}}$ for a fixed $\alpha=(A_1,A_2,\cdots,A_{i-1},0)$ which belongs to the slice $S_{i,\kappa}$. First we observe that $\mathcal{R}^{\epsilon}(Z_1^{A_1}Z_2^{A_2}\cdots Z_{i-1}^{A_{i-1}})\neq 0$ iff $2\leq d\leq i-1$. Actually we have~:
\begin{align}
\nonumber&\mathcal{R}^{\epsilon}(Z_1^{A_1}Z_2^{A_2}\cdots Z_{i-1}^{A_{i-1}})\\
=\mathop{\prod}_{1\leq j\leq i-1 \atop j\neq d}(\Lambda_j Z_j)^{A_j}&\left[\prod_{i=1}^{n_d-1}(\lambda_{d_i} z_{d_i}+\epsilon z_{d_{i+1}})^{\alpha_{d_i}}\cdot(\lambda_{d_{n_d}} z_{d_{n_d}})^{\alpha_{d_{n_d}}}
-\prod_{i=1}^{n_d}(\lambda_{d_i} z_{d_i})^{\alpha_{d_i}}\right]\label{Rmono}.
\end{align}
We only expand terms of non zero exponent $\alpha_{d_i}\neq 0$ through binomial expansion. In order to simplify notations, in the following we suppress $d$ and substitute $n_d,d_i$ by $n,i$ respectively. For the first product in the bracket of (\ref{Rmono}), we distribute the first parentheses of the product as~:
\begin{align}
&\prod_{i=1}^{n-1}\left((\lambda_i z_i)^{\alpha_i}+\sum_{s_i=0}^{\alpha_i-1}\binom{\alpha_i}{s_i}(\lambda_i z_i)^{s_i}(\epsilon z_{i+1})^{\alpha_i-s_i}\right)\cdot(\lambda_n z_n)^{\alpha_n}\label{r1}\\
=&(\lambda_1 z_1)^{\alpha_1}\prod_{i=2}^{n-1}\left(\lambda_i z_i+\epsilon z_{i+1}\right)^{\alpha_i}\cdot(\lambda_n z_n)^{\alpha_n}+\label{a31}\\
&\sum_{s_1=0}^{\alpha_1-1}\binom{\alpha_1}{s_1}(\lambda_1 z_1)^{s_1}(\epsilon z_2)^{\alpha_1-s_1}\cdot\prod_{i=2}^{n-1}\left(\lambda_i z_i+\epsilon z_{i+1}\right)^{\alpha_i}\cdot(\lambda_n z_n)^{\alpha_n}\label{a32}.
\end{align}
We notice that all of the lexicographic orders of those terms in $(\ref{a32})$, by Definition \ref{def_lexico} are strictly less than $LO(z_1^{\alpha_1}z_2^{\alpha_2}\cdots z_n^{\alpha_n}),$ because the degree of $z_1$ decreases at least 1. Let us focus on the term $(\ref{a31}),$ we also expand the first parentheses of the product in (\ref{a31})~:
\begin{align}
\nonumber&(\lambda_1 z_1)^{\alpha_1}\prod_{i=2}^{n-1}\left(\lambda_i z_i+\epsilon z_{i+1}\right)^{\alpha_i}\cdot(\lambda_n z_n)^{\alpha_n}\\
\nonumber=&(\lambda_1 z_1)^{\alpha_1}\prod_{i=2}^{n-1}\left((\lambda_i z_i)^{\alpha_i}+\sum_{s_i=0}^{\alpha_i-1}\binom{\alpha_i}{s_i}(\lambda_i z_i)^{s_i}(\epsilon z_{i+1})^{\alpha_i-s_i}\right)\cdot(\lambda_n z_n)^{\alpha_n}\\
=& (\lambda_1 z_1)^{\alpha_1}(\lambda_2 z_2)^{\alpha_2}\prod_{i=3}^{n-1}\left(\lambda_i z_i+\epsilon z_{i+1}\right)^{\alpha_i}\cdot(\lambda_n z_n)^{\alpha_n}+\label{a41}\\
& (\lambda_1 z_1)^{\alpha_1}\sum_{s_2=0}^{\alpha_2-1}\binom{\alpha_2}{s_2}(\lambda_2 z_2)^{s_2}(\epsilon z_3)^{\alpha_2-s_2}\cdot\prod_{i=3}^{n-1}\left(\lambda_i z_i+\epsilon z_{i+1}\right)^{\alpha_i}\cdot(\lambda_n z_n)^{\alpha_n}\label{a42}.
\end{align}
Similarly we find that all of the lexicographic orders of those monomials in the term of (\ref{a42}), by Definition \ref{def_lexico} are strictly less than $LO(z_1^{\alpha_1}z_2^{\alpha_2}\cdots z_n^{\alpha_n}),$ because the degree of $z_2$ decreases at least 1 and the degree of $z_1$ is the same. As for term (\ref{a41}), we continue to expand the first parentheses in the product left. Thus when we expand all of the $n-1$ parentheses one by one, we can find that except from the first term
\begin{equation}\label{prod3-n}
\prod_{i=1}^{n}(\lambda_i z_i)^{\alpha_i},
\end{equation}
the lexicographic order of all the other terms produced in (\ref{r1}) decreases, for that there is always one $i$ in $\{1,2,\cdots,n-1\}$ such that the degree of $z_i$ decreases at least 1 and the degree of the previous $z_{j}$'s, $j\leq i-1$ keep the same. And the term (\ref{prod3-n}) of the original degree is eliminated in the bracket in (\ref{Rmono}). With these observations above, recall that we suppress $d$ before, by Definition \ref{def_lexico} we conclude that:
\begin{equation*}
LO\left(\mathcal{R}^{\epsilon}(Z_1^{A_1}Z_2^{A_2}\cdots Z_{i-1}^{A_{i-1}})\right)< LO(Z_1^{A_1}Z_2^{A_2}\cdots Z_{i-1}^{A_{i-1}}).
\end{equation*}
for a fixed $\alpha=(A_1,A_2,\cdots,A_{i-1},0)$. Since the formula (\ref{Rmono}) also tells that the new monomials produced by $\mathcal{R}^{\epsilon}$ still have the same indices of $A_j,j\neq d,$ the remainder operator only changes the exponents of the variables corresponding with $\Lambda_d$. Thus we apply the remainder operator $\mathcal{R}^{\epsilon}$ at most for a finite time, we will get the only one term which has the smallest lexicographic order~:
\begin{equation}\label{small_lo}
Z_1^{A_1}\cdots z_{n_d}^{\kappa^{(i)}_d}\cdots Z_{i-1}^{A_{i-1}},
\end{equation}
for an index $\alpha\in S_{i,\kappa}$ with fixed $A_j,j\neq d$. It is obvious that if we apply the remainder operator $\mathcal{R}^{\epsilon}$ to (\ref{small_lo}) once again, it turns into zero. Thus for each $A_d,$ there exists an integer $\eta_{A_d}$ such that
\begin{equation*}
(\mathcal{R}^{\epsilon})^{\eta_{A_d}}(Z_1^{A_1}Z_2^{A_2}\cdots Z_{i-1}^{A_{i-1}})=0.
\end{equation*}
While on each Siegel slice $S_{i,\kappa},$ we have a finite $A_d$ such that $|A_d|=\kappa^{(i)}_d,$ thus denote
\begin{equation*}
\eta_{i,\kappa} :=\max_{|A_d|=\kappa^{(i)}_d}\eta_{A_d},
\end{equation*}
and we have
\begin{equation}\label{R_S_ikappa}
(\mathcal{R}^{\epsilon})^{\eta_{i,\kappa}}g_{\scriptscriptstyle S_{i,\kappa}}=0.
\end{equation}

Since we have proved the nilpotency of the remainder operator $\mathcal{R}^{\epsilon}$ on each Siegel slice $S_{i,\kappa}$, note that the whole Siegel slice $S$ in $(\ref{decomposition})$ is a finite and disjoint union of $S_{i,\kappa},\kappa\in QR_i$. Considering $\sharp|QR_i|< \infty$ and
\begin{equation*}
g_{\scriptscriptstyle S}=\sum_{i=1}^{m}g_{\scriptscriptstyle S_i}=g_{\scriptscriptstyle S_1}+g_{\scriptscriptstyle S_2}+\sum_{i=3}^{m}\sum_{\kappa\in QR_i}g_{\scriptscriptstyle S_{i,\kappa}},
\end{equation*}
combining (\ref{R_S1}),(\ref{R_S2}) and (\ref{R_S_ikappa}), there exists a finite number
$$\eta(\kappa,n) :=\max\{1,n_d,\max_{\kappa\in QR_i \atop 3\leq i\leq m}\eta_{i,\kappa}\}$$
such that~:
\begin{equation*}
(\mathcal{R}^{\epsilon})^{\eta}g_{\scriptscriptstyle S}=0.
\end{equation*}
Thus we have proved our conclusion for only one single Jordan block. When there are several Jordan blocks, the monomial having the smallest lexicographic order in $(\ref{small_lo})$ will be the product of several monomials with the smallest lexicographic orders which corresponds to different blocks. Thus the lemma is proved.
\end{proof}

With this lemma above, we can solve and estimate the homological equation restricted on Siegel slice.
\begin{Lemma}\label{lemmaS}
The restriction to the Siegel slice of the homological equation $(\ref{homode})$ has a unique solution $\varphi_{\scriptscriptstyle S}$ w.r.t $g_{\scriptscriptstyle S}$ for any given $g_{\scriptscriptstyle S}\in H(B_r)^n$. Moreover, we have the following estimate~:
\begin{equation}\label{EstonS}
\left|\varphi_{\scriptscriptstyle S}\right|_{r-\delta}\leq C_S\cdot \delta^{-\vartheta} |g_{\scriptscriptstyle S}|_r,\quad \text{for all }~~0<\delta<r\leq1,
\end{equation}
where $\vartheta$ is a positive constant which depends only on $\kappa,\sigma,n$ and $C_S$ is a constant which depends only on $\kappa,\sigma,\Lambda,n,C_0$.
\end{Lemma}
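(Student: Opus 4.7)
The plan is to exploit the nilpotency of $\mathcal{R}^{\epsilon}$ on the Siegel slice (Lemma \ref{nilR}) in order to represent $\mathcal{L}^{-1}|_{S}$ as a \emph{finite} Neumann-like sum, and then to bound each term using the Diophantine hypotheses combined with standard polydisc--Cauchy losses. Uniqueness is immediate: on $S$, $\Omega - \epsilon N$ acts monomial-wise via the invertible matrix $\Omega_{\alpha} - \epsilon N$ (by non-resonance and (\ref{inverse-d})), and together with the invariance of $S$ under $\mathcal{L}$ established in Lemma \ref{invariance} this forces formal injectivity of $\mathcal{L}|_{S}$. For existence, I would factor $\mathcal{L} = (\Omega-\epsilon N)\bigl(I + (\Omega-\epsilon N)^{-1}\mathcal{R}^{\epsilon}\bigr)$. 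Since $(\Omega-\epsilon N)^{-1}$ preserves each monomial while $\mathcal{R}^{\epsilon}$ strictly decreases the lexicographic order of the Jordan-block variables (as in the proof of Lemma \ref{nilR}), the composition $(\Omega-\epsilon N)^{-1}\mathcal{R}^{\epsilon}$ is also nilpotent on $S$ of order at most $\eta=\eta(\kappa,n)$, giving the formal solution
\[
\varphi_{\scriptscriptstyle S} \;=\; \sum_{k=0}^{\eta-1}(-1)^{k}\bigl[(\Omega-\epsilon N)^{-1}\mathcal{R}^{\epsilon}\bigr]^{k}\,(\Omega-\epsilon N)^{-1}\, g_{\scriptscriptstyle S}.
\]

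The next step is to control the diagonal inverse uniformly on $S$. From (\ref{inverse-d}) one has $|(\Omega_{\alpha}-\epsilon N)^{-1}| \leq \sum_{s=0}^{d-1}\epsilon^{s}|\Omega_{\alpha}^{-1}|^{s+1}$ with $|\Omega_{\alpha}^{-1}| = \max_{j}|\lambda^{\alpha}-\lambda_{j}|^{-1}$, and the Diophantine assumptions (\ref{small_A1}), (\ref{small_A2}) on $S_{1}\cup S_{2}$, together with the $QR-$Diophantine condition (\ref{def_qr_Dio}) on each quasi-resonant slice $S_{i,\kappa}$, supply uniform lower bounds of the form $|\lambda^{\alpha}-\lambda_{j}| \geq C_{0}|\alpha|^{-\sigma}$. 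Because $S = \bigcup_{i=1}^{m} S_{i}$ is a \emph{finite} union ($\sharp QR_{i}<\infty$), these bounds combine into a single uniform polynomial estimate $|(\Omega_{\alpha}-\epsilon N)^{-1}| \leq C(\Lambda,C_{0})\,|\alpha|^{\sigma d}$ for every $\alpha \in S$, where $d$ is the nilpotency order of $N$.

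Finally, I would convert this pointwise-in-$\alpha$ bound into a polydisc estimate using the elementary inequality $|\alpha|^{a}(1-\delta'/r)^{|\alpha|} \leq C_{a}(\delta')^{-a}$ (for $0<\delta'<r\leq 1$), which yields $|(\Omega-\epsilon N)^{-1}f|_{r-\delta'} \leq C(\delta')^{-\sigma d}|f|_{r}$. Coupled with Lemma \ref{lemmaR}, which contributes a factor $\epsilon\tilde{C}$ with \emph{no} loss of radius, I would iterate with a per-step shrinkage of $\delta/(2\eta)$ so that the $k$-th term in the finite sum is bounded by $(\epsilon\tilde{C})^{k}\bigl(C(\delta/(2\eta))^{-\sigma d}\bigr)^{k+1}|g_{\scriptscriptstyle S}|_{r}$. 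Summing over $0 \leq k \leq \eta-1$ produces the announced estimate with $\vartheta$ of order $\sigma d\eta$, depending only on $\kappa$, $\sigma$, $n$. The hard part is precisely orchestrating this interplay between the polynomial small-divisor loss and the finitely many shrinkages dictated by nilpotency: the uniform Diophantine bound over the full (finite) union $S$ and the finiteness of $\eta$ are exactly what make $\vartheta$ independent of $r$ and $\delta$. A secondary point is that $\epsilon$ must be kept small enough (in terms of $\Lambda$, $\kappa$, $n$) for the finite Neumann expansion to be valid and for the geometric prefactor to be absorbed into $C_{S}$, but this is compatible with the smallness conditions already imposed in Lemma \ref{lemmaP}.
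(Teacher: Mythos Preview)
Your proof is correct and follows the same overall architecture as the paper's: both invert $\mathcal{L}|_{S}$ via the \emph{finite} Neumann sum coming from the nilpotency of $\mathcal{R}^{\epsilon}$ on $S$ (Lemma~\ref{nilR}), and both control $(\Omega-\epsilon N)^{-1}$ through the Diophantine hypotheses at the cost of a power of $\delta$. The organization differs in two places worth recording. First, the paper checks that $\Omega$, $N$ and $\mathcal{R}^{\epsilon}$ pairwise commute, so that $\bigl[(\Omega-\epsilon N)^{-1}\mathcal{R}^{\epsilon}\bigr]^{k}=(\Omega-\epsilon N)^{-k}(\mathcal{R}^{\epsilon})^{k}$; you bypass this by noting that $(\Omega-\epsilon N)^{-1}$ preserves each monomial, whence the composition itself is nilpotent of order $\leq\eta$. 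Second, and more substantively, the paper does \emph{not} invoke Lemma~\ref{lemmaR} on the Siegel slice. Instead it works slice by slice ($S_{1}$, $S_{2}$, each $S_{i,\kappa}$) and uses that on every slice $(\mathcal{R}^{\epsilon})^{j}$ applied to a single monomial produces only a bounded number of monomials (the bound independent of $|A_{1}|$), each with the same total degree; the $\mathcal{R}^{\epsilon}$-contribution is therefore essentially free, and only the factor $(\Omega_{\alpha}-\epsilon N)^{-(j+1)}$ needs the Diophantine bound and the radius loss. Your route---a single uniform estimate $|(\Omega_{\alpha}-\epsilon N)^{-1}|\leq C\,|\alpha|^{\sigma d}$ valid across all of $S$ (which is legitimate: the cases not directly covered by (\ref{small_A1}), (\ref{small_A2}), (\ref{def_qr_Dio}) have $|\lambda^{\alpha}|\neq|\lambda_{j}|$ and hence a uniform constant lower bound), combined with Lemma~\ref{lemmaR} as a radius-free operator bound on $\mathcal{R}^{\epsilon}$---is more compact and avoids the slice-by-slice bookkeeping. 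Both yield comparable exponents, $\vartheta$ of order $\sigma\, n\,\eta$. One small remark: the smallness of $\epsilon$ is not needed for the \emph{validity} of the finite Neumann expansion (it terminates regardless), only for Lemma~\ref{lemmaR} and to absorb constants into $C_{S}$.
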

\begin{proof}
First we consider the linear part having only one single Jordan block $\Lambda_d$. Without loss of generality, in order to make this proof simpler we will suppose that only the first eigenvalue belongs to the unit circle, i.e., $\mathcal{I}_1=\{1\}$. If $\mathcal{I}_1$ has more that one element, we give the similar assumption of Diophantine conditions and this makes no essential difference to our proof.

Similar to Poincar\'{e} slice case, by $(S,P)-$decomposition $(\ref{decomposition})$, on $S_1$ we have~:
$$\Omega_{\alpha}=\operatorname{diag}\{\lambda_1^{\alpha_1}-\lambda_1,\lambda_1^{\alpha_1}-\lambda_2,\cdots,\lambda_1^{\alpha_1}-\lambda_n\}.$$
By condition $(\ref{small_A1})$, we have
$$|\lambda_1^{\alpha_1}-\lambda_1|\geq C_0|\alpha_1|^{-\sigma},~\text{and}~|\lambda_1^{\alpha_1}-\lambda_i|\geq 1-|\lambda_i|~\text{for}~2\leq i\leq n.$$
Recall that $S_2=\{\alpha\in \mathbb{Z}_+^n(2): \sum_{l=2}^{m}|A_j|=1\}$, if $\alpha\in S_2$ then in the diagonal matrix
$$\Omega_{\alpha}=\operatorname{diag}\{\lambda^{\alpha}-\lambda_1,\lambda^{\alpha}-\lambda_2,\cdots,\lambda^{\alpha}-\lambda_n\},$$
$\alpha$ has and only has one $|A_j|=1,j\geq 2$, thus for $i,j\geq 2$ by $(\ref{small_A2})$ we have~:
\begin{align*}
|\lambda_1^{\alpha_1}\lambda_i-\lambda_j|&\geq |\lambda_j-\lambda_i|~\text{for}~|\lambda_i|\neq |\lambda_j|,\\
\left|\lambda_1^{\alpha_1}\lambda_i-\lambda_j\right|&\geq C_0(\alpha_1+1)^{-\sigma}~\text{for}~\lambda_i\neq\lambda_j~\text{but}~|\lambda_i|=|\lambda_j|,\\
|\lambda_1^{\alpha_1}\lambda_j-\lambda_i|&\geq |\lambda_j||\lambda_1^{\alpha_1}-1|\geq |\lambda_j|\cdot C_0|\alpha_1+1|^{-\sigma}~\text{for}~\lambda_i=\lambda_j~.
\end{align*}
Finally, for each $3\leq i\leq m$, we fix a $\kappa^{(i)}\in \mathbb{N}^{i-2}$.  Let $\alpha\in S_{i,\kappa}$, then by the $QR-$Diophantine condition of quasi-resonance $(\ref{def_qr_Dio})$  we have~:
\begin{equation*}
\left|\lambda_1^{\alpha_1}\cdot\prod_{j=2}^{i-1}\Lambda_j^{A_j}- \lambda_k\right|\geq C_0(|\alpha_1|+\sum_{j=2}^{i-1}\kappa^{(i)}_j)^{-\sigma},
\end{equation*}
for all $k\in \mathcal{I}_i,\sum_{j=1}^{i-1}|A_j|\geq 2$ and $|A_j|=\kappa^{(i)}_j,2\leq j\leq i-1$. Thus we can get the estimate $|\Omega_{\alpha}^{-1}|$ on each Siegel slice $S_{i,\kappa},$ which will be used below.

With the observations above, let us estimate $|(\Omega_{\alpha}-\epsilon N)^{-1}|$ when $\alpha\in S$. In order to simplify the following estimate, we denote $\omega(\alpha_1):=C_0^{-1}|\alpha_1|^{\sigma}$. We can assume $\omega(\alpha_1)\geq 1$ even if this means $|\alpha_1|$ is large enough. According to (\ref{inverse-d}), by our previous choice (\ref{e2}) there exists a constant $\gamma_s$ which depends on $\Lambda,\kappa$ such that when $\alpha\in S_1$ we have :
\begin{equation}\label{Omega_10}
|w_0^{-1}|:=|(\Omega_{ \alpha}-\epsilon N)^{-1}|=\left|\sum_{k=0}^{n-1}\Omega_{\alpha}^{-k-1}(\epsilon N)^k\right|\leq  \sum_{k=0}^{n-1}|\Omega_{ \alpha}^{-1}|^{k+1}\epsilon^k \leq \gamma_s n \cdot\omega(\alpha_1)^n.
\end{equation}
On the other hand, when $\alpha\in S_2$, we have:
\begin{equation}\label{Omega_11}
|w_1^{-1}|:=|(\Omega_{\alpha}-\epsilon N)^{-1}|=\left|\sum_{k=0}^{n-1}\Omega_{\alpha}^{-k-1}(\epsilon N)^k\right| \leq  \sum_{k=0}^{n-1}|\Omega_{ \alpha}^{-1}|^{k+1}\epsilon^k \leq \gamma_s n \cdot\omega(\alpha_1+1)^n.
\end{equation}
Although this means increasing $\gamma_s$, we can assume that $\gamma_s n\geq 1$ so that the right hand side above is greater that $1$.

For $3\leq i\leq m$, suppose there is quasi$-$resonance $\kappa\in QR_i$ with $\alpha\in S_{i,\kappa}$. Then, we have~:
\begin{equation}\label{Omega_1k}
|w_{\kappa}^{-1}|:=|(\Omega_{ \alpha}-\epsilon N)^{-1}|
=\left|\sum_{k=0}^{n-1}\Omega_{ \alpha}^{-k-1}(\epsilon N)^k\right|
\leq  \sum_{k=0}^{n-1}|\Omega_{ \alpha}^{-1}|^{k+1}\epsilon^k \leq \gamma_s n \cdot\omega(\alpha_1+\sum_{j=2}^{i-1}\kappa_j)^n.
\end{equation}
Similar to the Poincar\'e slice, the homological equation restricted on Siegel slice is~:
\begin{equation}\label{LonSiegel}
\mathcal{L}(\varphi_{\scriptscriptstyle S})
=(\Omega-\epsilon N)\varphi_{\scriptscriptstyle S}+ \mathcal{R}^{\epsilon}\varphi_{\scriptscriptstyle S}
= g_{\scriptscriptstyle S},\quad\text{for a given}\quad g_{\scriptscriptstyle S}\in H(B_r)^n.
\end{equation}
We write the formal solution of it~:
\begin{equation*}
\varphi_{\scriptscriptstyle S}= \left(I+(\Omega-\epsilon N)^{-1}\mathcal{R}^{\epsilon}\right)^{-1}(\Omega-\epsilon N)^{-1}g_{\scriptscriptstyle S}.
\end{equation*}
By $(\ref{Ldecomp})$ and $(\ref{OmegaN})$, for a given $g_{\scriptscriptstyle S}=\sum_{\alpha\in S}g_{\alpha}z^{\alpha}$, we have  $\Omega \mathcal{R}^{\epsilon}=\mathcal{R}^{\epsilon}\Omega$. Indeed, we have
\begin{eqnarray*}
\Omega \mathcal{R}^{\epsilon}g_{\scriptscriptstyle S} & = & g_{\scriptscriptstyle S}(\Lambda^2+\epsilon N  \Lambda)-g_{\scriptscriptstyle S}(\Lambda^2)-\Lambda\left( g_{\scriptscriptstyle S}(\Lambda+\epsilon N)-g_{\scriptscriptstyle S}(\Lambda)\right)\\
\mathcal{R}^{\epsilon}\Omega g_{\scriptscriptstyle S}&=& g_{\scriptscriptstyle S}(\Lambda^2+\epsilon N  \Lambda)-\Lambda g_{\scriptscriptstyle S}(\Lambda+\epsilon N)-g_{\scriptscriptstyle S}(\Lambda^2)-\Lambda g_{\scriptscriptstyle S}(\Lambda).
\end{eqnarray*}
On the other hand,
$\mathcal{R}^{\epsilon}N(\varphi)= N\varphi(\Lambda+\epsilon N)-N\varphi(\Lambda)=N\mathcal{R}^{\epsilon}(\varphi)$.
Hence, by observing $(\ref{inverse-d})$,  $(\Omega-\epsilon N)^{-1}$ and $\mathcal{R}^{\epsilon}$ are  pairwise commuting. Then according to Lemma \ref{nilR}, the inverse of the operator $I+(\Omega-\epsilon N)^{-1}\mathcal{R}^{\epsilon}$ does exist as a finite sum~:
\begin{equation*}
\left(I+(\Omega-\epsilon N)^{-1}\mathcal{R}^{\epsilon}\right)^{-1}=\sum_{j\geq0}(-1)^j\left((\Omega-\epsilon N)^{-1}\mathcal{R}^{\epsilon}\right)^j= \sum_{j=0}^{\eta-1}(-1)^j(\Omega-\epsilon N)^{-j}\left(\mathcal{R}^{\epsilon}\right)^j .
\end{equation*}
Thus we have proved the existence of the formal solution to the homological equation restricted on Siegel slice (\ref{LonSiegel}).

By Lemma \ref{invariance} we can estimate it on each Siegel slices $S_i$ separately. First on $S_1$, by equation $(\ref{Omega_10})$ we have:
\begin{align*}
|\varphi_{\scriptscriptstyle S_1}|_{r-\delta}
&=\left|\left(I+(\Omega-\epsilon N)^{-1}\mathcal{R}^{\epsilon}\right)^{-1}(\Omega-\epsilon N)^{-1}g_{\scriptscriptstyle S_1}\right|_{r-\delta}\\
&= \left|\sum_{\alpha_1=2}^{\infty}(\Omega_{\alpha}-\epsilon N)^{-1}g_{\scriptscriptstyle \alpha_1,0}z_1^{\alpha_1}\right|_{r-\delta}\\
&\leq \sum_{\alpha_1=2}^{\infty}|w_0|^{-1}|g_{\scriptscriptstyle \alpha_1,0}|(r-\delta)^{\alpha_1}\\
&\leq C_{S_1}\cdot \sum_{\alpha_1=2}^{\infty}\omega(\alpha_1)^n\left(\frac{r-\delta}{r}\right)^{\alpha_1}|g_{\scriptscriptstyle \alpha_1,0}|r^{\alpha_1} .
\end{align*}
And we have
\begin{equation}\label{deltaer}
1-\frac{\delta}{r}\leq 1-\delta\leq e^{-\delta}~\text{when}~0<\delta<r\leq 1.
\end{equation}
Let $h(x):=x^n e^{-\delta x},x\geq 2$, then $h'(x)=(n-\delta x)x^{n-1}e^{-\delta x}$, thus $h(x)$ achieves its maximum at $x_0=\frac{n}{\delta}$, i.e.,
\begin{equation}\label{x_n_e}
h(x)\leq h(\frac{n}{\delta})=\delta^{-n}\cdot n^ne^{-n}.
\end{equation}
Combining $(\ref{deltaer})$ and $(\ref{x_n_e})$ we obtain:
\begin{equation}\label{phi_S1}
|\varphi_{\scriptscriptstyle S_1}|_{r-\delta}
\leq C_{S_1}\sum_{\alpha_1=2}^{\infty}\alpha_1^{\sigma n}e^{-\delta \alpha_1}|g_{\scriptscriptstyle \alpha_1,0}|r^{\alpha_1}\leq C_{S_1}\cdot \delta^{-\sigma n}|g_{\scriptscriptstyle S_1}|_r .
\end{equation}
Note that among the estimates above we do not change the notation $C_{S_1}$, because it is constant which only depends on $\sigma,\Lambda,n,\kappa,C_0$ and it does not influence the convergence proof in section \ref{convergenceproof}.

On $S_2$, according to Lemma \ref{nilR} we have~:
\begin{align*}
|\varphi_{\scriptscriptstyle S_2}|_{r-\delta}
&=\left|\left(I+(\Omega-\epsilon N)^{-1}\mathcal{R}^{\epsilon}\right)^{-1}(\Omega-\epsilon N)^{-1}g_{\scriptscriptstyle S_2}\right|_{r-\delta}\\
&=\left|\sum_{j=0}^{n-1} \sum_{\alpha_1=1}^{\infty}\sum_{i=2}^{n}(-1)^j(\Omega_{\alpha}-\epsilon N)^{-(j+1)}g_{\scriptscriptstyle \alpha_1,e_i}\left(\mathcal{R}^{\epsilon}\right)^j \left(z_1^{\alpha_1}z_i\right) \right|_{r-\delta}.
\end{align*}
Since the sum for index $j$ is a finite sum, it will produce at most finite monomials of form $z_1^{\alpha_1}z_i$ for a fixed $\alpha_1$, we also assume $|(\Omega_{\alpha}-\epsilon N)|^{-1}\geq 1$, by $(\ref{Omega_11}),(\ref{deltaer})$ and Cauchy estimates we have~:
\begin{align*}
|\varphi_{\scriptscriptstyle S_2}|_{r-\delta}
&\leq C_{S_2}\cdot\sum_{\alpha_1=1}^{\infty}\sum_{i=2}^{n}|w_1|^{-n}|g_{\scriptscriptstyle \alpha_1,e_i}|(r-\delta)^{\alpha_1+1}\\
&\leq C_{S_2}\sum_{\alpha_1=1}^{\infty} \omega(\alpha_1+1)^{n^2} \left(\frac{r-\delta}{r}\right)^{\alpha_1+1}|g_{\scriptscriptstyle \alpha_1,e_i}|r^{\alpha_1+1},
\end{align*}
where $C_{S_2}$ is a constant which only depends on $\sigma,\Lambda,n,\kappa,C_0$. Thus by $(\ref{x_n_e})$ we obtain~:
\begin{equation}\label{phi_S2}
|\varphi_{\scriptscriptstyle S_2}|_{r-\delta}
\leq C_{S_2}\cdot \sum_{\alpha_1=2}^{\infty}\alpha_1^{\sigma n^2}e^{-\delta \alpha_1}|g_{\scriptscriptstyle \alpha_1,e_i}|r^{\alpha_1+1}\leq C_{S_2}\cdot \delta^{-\sigma n^2}|g_{\scriptscriptstyle S_2}|_r .
\end{equation}

Finally, let us consider the slice $S_{i,\kappa}$. For a fixed $\kappa\in QR_i,$ by Lemma \ref{nilR} and (\ref{deltaer}) we have~:
\begin{align*}
|\varphi_{\scriptscriptstyle S_{i,\kappa}}|_{r-\delta}
&=\left|\left(I+(\Omega-\epsilon N)^{-1}\mathcal{R}^{\epsilon}\right)^{-1}(\Omega-\epsilon N)^{-1}g_{\scriptscriptstyle S_i}\right|_{r-\delta}\\
&=\left| \sum_{j=0}^{\eta-1}(-1)^j(\Omega-\epsilon N)^{-(j+1)}(\mathcal{R}^{\epsilon})^j\sum_{\alpha_1=0}^{\infty} \mathop{\sum}_{2\leq j\leq i-1 \atop |A_j|=\kappa_j}g_{\scriptscriptstyle \alpha}z_1^{\alpha_1}Z_2^{A_2}\cdots Z_{i-1}^{A_{i-1}} \right|_{r-\delta}\\
&=\left|\sum_{j=0}^{\eta-1}\sum_{\alpha_1=0}^{\infty}\mathop{\sum}_{2\leq j\leq i-1 \atop |A_j|=\kappa_j}(\Omega_{\alpha}-\epsilon N)^{-(j+1)}g_{\scriptscriptstyle \alpha}\left(\mathcal{R}^{\epsilon}\right)^j\left(z_1^{\alpha_1}Z_2^{A_2}\cdots Z_{i-1}^{A_{i-1}}\right) \right|_{r-\delta}.
\end{align*}
Similar to the estimate on $S_2$, since the sum for index $j$ is finite and is related to $\eta$, it will produce at most finite monomials of form $z_1^{\alpha_1}Z_2^{A_2}\cdots Z_{i-1}^{A_{i-1}}$ for a fixed $\alpha_1$, we can also assume $|(\Omega_{\alpha}-\epsilon N)|^{-1}\geq 1$, by  $(\ref{Omega_1k}),(\ref{deltaer})$ and $(\ref{x_n_e})$ we have:
\begin{equation}\label{phi_Sik}
\begin{aligned}
|\varphi_{\scriptscriptstyle S_{i,\kappa}}|_{r-\delta}
&\leq C_{S_{i,\kappa}}\sum_{\alpha_1=0}^{\infty}\mathop{\sum}_{2\leq j\leq i-1 \atop |A_j|=\kappa_j} \left|\omega\left(\alpha_1+\sum_{j=2}^{i-1}\kappa_j\right)^{\eta n}\right||g_{\scriptscriptstyle \alpha}|(r-\delta)^{\alpha_1+\sum_{j=2}^{i-1}\kappa_j}\\
&\leq C_{S_{i,\kappa}}\cdot\sum_{\alpha_1=0}^{\infty}\left(\alpha_1+\sum_{j=2}^{i-1}\kappa_j\right)^{\sigma\eta n}\left(\frac{r-\delta}{r}\right)^{\alpha_1+\sum_{j=2}^{i-1}\kappa_j}|g_{\scriptscriptstyle \alpha}|r^{\alpha_1+\sum_{j=2}^{i-1}\kappa_j}\\
&\leq C_{S_{i,\kappa}}\cdot\delta^{-\sigma\eta n}|g_{\scriptscriptstyle S_{i,\kappa}}|_r.
\end{aligned}
\end{equation}

With the above estimates on these Siegel slices, combining $(\ref{phi_S1}),(\ref{phi_S2})$ and $(\ref{phi_Sik})$, by the invariance Lemma \ref{invariance} we have~:
\begin{align*}
\left|\varphi_{\scriptscriptstyle S}\right|_{r-\delta}
&=\left|\sum_{i=1}^{m}\varphi_{\scriptscriptstyle S_i}\right|_{r-\delta}=\left|\varphi_{\scriptscriptstyle S_1}\right|_{r-\delta}+\left|\varphi_{\scriptscriptstyle S_2}\right|_{r-\delta}
+\left|\sum_{i=3}^{m}\sum_{\kappa\in QR_i}\varphi_{\scriptscriptstyle S_{i,\kappa}}\right|_{r-\delta}\\
&\leq C_{S_1}\cdot \delta^{-\sigma n}|g_{\scriptscriptstyle S_1}|_r + C_{S_2}\cdot\delta^{-\sigma n^2}|g_{\scriptscriptstyle S_2}|_r+\sum_{i=3}^{m}\sum_{\kappa\in QR_i}C_{S_{i,\kappa}}\cdot\delta^{-\sigma\eta n}|g_{\scriptscriptstyle S_{i,\kappa}}|_r\\
&\leq C_S\cdot \delta^{-\vartheta} |g_{\scriptscriptstyle S}|_r,
\end{align*}
where $C_S$ depends only on $\kappa,\sigma,\Lambda,n,C_0$ and $\vartheta=\max \{\sigma n,\sigma n^2,\sigma\eta n\}$, which depends only on $\kappa,\eta,\sigma,n$. Thus the lemma is proved.
\end{proof}

With those lemmas above, eventually we obtain the estimate of the solution to the homological equation (\ref{Lphi_g}).
\begin{proof}[Proof of Proposition \ref{ProJn}]
Suppose $\epsilon$ sufficiently small so that it satisfies (\ref{e2}) and (\ref{e3}), i.e.,
\begin{equation}\label{epsilon}
\epsilon< \min\{(2\gamma_p)^{-1},(n\Gamma(\Lambda,\kappa)\tilde{C})^{-1}\}.
\end{equation}
Combining (\ref{polyineq}),(\ref{PE}) and (\ref{deltaer}), from Lemma \ref{lemmaP} and \ref{lemmaS} we conclude that~:
\begin{align*}
\|\varphi\|_{r-\delta}
&\leq |\varphi|_{r-\delta}=|\varphi_{\scriptscriptstyle P}|_{r-\delta}+|\varphi_{\scriptscriptstyle S}|_{r-\delta}\leq C_P\cdot |g_{\scriptscriptstyle P}|_{r-\delta}+ C_S\cdot \left(\frac{\delta}{2}\right)^{-\vartheta}|g_{\scriptscriptstyle S}|_{r-\frac{\delta}{2}}\\
&\leq C_2 \cdot \left(\frac{\delta}{2}\right)^{-\vartheta}\left(|g_{\scriptscriptstyle P}|_{r-\frac{\delta}{2}}+|g_{\scriptscriptstyle S}|_{r-\frac{\delta}{2}}\right)= C_2 \cdot \left(\frac{\delta}{2}\right)^{-\vartheta}|g|_{r-\frac{\delta}{2}}\\
&\leq C_2 \cdot \left(\frac{\delta}{2}\right)^{-\vartheta}\cdot \left(\frac{2r}{\delta}\right)^n\|g\|_r\leq C_2 \cdot \delta^{-(\vartheta+n)}\|g\|_r.
\end{align*}
here we use $r\leq 1$ and we also keep the notation of the constant $C_2$, which depends only on $\kappa,\sigma,\Lambda,n,C_0$.
Thus the proposition is proved.
\end{proof}

\section{Convergence proof}\label{convergenceproof}

With all of the preparations above,  in order to proof the holomorphic linearization, we can use verbatim the proof by the Newton iteration method as introduced by E.Zehnder \cite{Zeh77}. We shall recall its main points without proof.

We first give the most important lemma in our convergence proof. The main idea comes from E.Zehnder\cite{Zeh77}:
\begin{Lemma}\label{EofHomo}
Let $\epsilon$ satisfies $(\ref{epsilon})$, $g\in \mathcal{H}(B_r)^n$ for some $0<r\leq 1$, then the homological equation $(\ref{Lphi_g})$ has a unique solution $\varphi\in {\mathcal{H}(D_r)}^n$, moreover, the following estimate holds for all $0<\delta<r\leq1:$
\begin{equation}\label{Ephiofg}
\|\varphi\|_{C^1,r-\delta}\leq C_3 \delta^{-\tau}\|g\|_r.
\end{equation}
where $\tau=\vartheta+n+1$, which depends only on $\kappa,\sigma,n$ and $C_3$ is a constant depending on $\kappa,\sigma,C_0,|\Lambda^{\epsilon}|$ and $|(\Lambda^{\epsilon})^{-1}|$.
\end{Lemma}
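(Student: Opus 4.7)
The strategy is to bootstrap Proposition~\ref{ProJn}, which already gives the unique formal (and holomorphic) solution $\varphi$ of (\ref{Lphi_g}) in $H(B_{r-\delta})^n$ with $\|\varphi\|_{r-\delta}\le C_2\delta^{-(\vartheta+n)}\|g\|_r$, into a $C^1$ estimate on the enlarged domain $D_{r-\delta}=B_{r-\delta}\cup \Lambda^{\epsilon}(B_{r-\delta})$. Uniqueness on $D_r$ is free: the formal solution constructed by the $(S,P)$-decomposition is unique on $B_r$, and the values on $\Lambda^{\epsilon}(B_r)$ are then forced by the identity $\varphi(\Lambda^{\epsilon}w)=\Lambda^{\epsilon}\varphi(w)+g(w)$.

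First, apply Proposition~\ref{ProJn} with $\delta$ replaced by $\delta/2$ to obtain $\|\varphi\|_{B_{r-\delta/2}}\le C_2 (\delta/2)^{-(\vartheta+n)}\|g\|_r$. A Cauchy estimate going from $B_{r-\delta/2}$ down to $B_{r-\delta}$ yields $\|D\varphi\|_{B_{r-\delta}}\le \tfrac{2n}{\delta}\|\varphi\|_{B_{r-\delta/2}}\le C'\delta^{-(\vartheta+n+1)}\|g\|_r$. This already produces the desired exponent $\tau=\vartheta+n+1$ on the $C^1$-norm restricted to $B_{r-\delta}$, and the constant $C'$ depends only on $\kappa,\sigma,C_0,n$ and $\Lambda$ (through $C_2$).

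Next, extend to $\Lambda^{\epsilon}(B_{r-\delta})$ by the homological equation itself. For any $z=\Lambda^{\epsilon}w$ with $w\in B_{r-\delta}$, one has $\varphi(z)=\Lambda^{\epsilon}\varphi(w)+g(w)$, so by (\ref{matrix_mod})
\[
|\varphi(z)|\le n\,|\Lambda^{\epsilon}|\cdot\|\varphi\|_{B_{r-\delta}}+\|g\|_r.
\]
Differentiating the homological identity gives $D\varphi(\Lambda^{\epsilon}w)=\Lambda^{\epsilon}D\varphi(w)(\Lambda^{\epsilon})^{-1}+Dg(w)(\Lambda^{\epsilon})^{-1}$, and a Cauchy estimate provides $\|Dg\|_{B_{r-\delta}}\le \tfrac{n}{\delta}\|g\|_r$. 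Combining with the bound for $\|D\varphi\|_{B_{r-\delta}}$ from the first step, we obtain
\[
\|D\varphi\|_{\Lambda^{\epsilon}(B_{r-\delta})}\le C''\,\bigl(|\Lambda^{\epsilon}|\,|(\Lambda^{\epsilon})^{-1}|\,\delta^{-(\vartheta+n+1)}+|(\Lambda^{\epsilon})^{-1}|\,\delta^{-1}\bigr)\|g\|_r.
\]
Taking the maximum of all four contributions and absorbing constants yields the claimed estimate $\|\varphi\|_{C^1,r-\delta}\le C_3\delta^{-\tau}\|g\|_r$ with $C_3$ depending on $\kappa,\sigma,C_0,|\Lambda^{\epsilon}|,|(\Lambda^{\epsilon})^{-1}|$.

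The main obstacle, and really the only genuinely new step compared with Proposition~\ref{ProJn}, is this extension: one must carefully propagate the estimates from $B_{r-\delta}$ to its image $\Lambda^{\epsilon}(B_{r-\delta})$ via the homological equation, which unavoidably couples $\varphi$ to both its derivative and to $Dg$, thereby introducing the factors $|\Lambda^{\epsilon}|$ and $|(\Lambda^{\epsilon})^{-1}|$ into $C_3$. The single extra factor $\delta^{-1}$ (the bump from $\vartheta+n$ to $\tau=\vartheta+n+1$) comes entirely from the Cauchy estimates used to pass from the sup norm of $\varphi$ (or of $g$) to the sup norm of its derivative on a slightly smaller polydisk.
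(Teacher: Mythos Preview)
Your proposal is correct and follows essentially the same approach as the paper: invoke Proposition~\ref{ProJn} (with $\delta/2$) for the sup estimate on $B_{r-\delta/2}$, use a Cauchy estimate to pass to $D\varphi$ on $B_{r-\delta}$, and then propagate both bounds to $\Lambda^{\epsilon}(B_{r-\delta})$ via the homological identity. The only cosmetic difference is that the paper applies the Cauchy estimate to the composition $\varphi\circ\Lambda^{\epsilon}$ itself, whereas you differentiate the homological equation first and then bound $D\varphi(\Lambda^{\epsilon}w)$ in terms of $D\varphi(w)$ and $Dg(w)$; the two computations are equivalent and produce the same exponent $\tau=\vartheta+n+1$ and the same dependence of $C_3$ on $|\Lambda^{\epsilon}|$ and $|(\Lambda^{\epsilon})^{-1}|$.
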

\begin{proof}
Since $\varphi$ is holomorphic on $B_{r-\delta}$ and solves there the holomological equation (\ref{homode}), then $\varphi\circ \Lambda^{\epsilon}$ is also holomorphic there. 
According to the estimate (\ref{Est_L}), we have
\begin{equation*}
\|\varphi\circ\Lambda^{\epsilon}\|_{r-\delta}\leq |\Lambda^{\epsilon}|\|\varphi\|_{r-\delta}+\|g\|_{r-\delta}
\leq(|\Lambda^{\epsilon}|C_2+1)\cdot\delta^{-(\vartheta+n)}\|g\|_r,
\end{equation*}
since $\delta<1$. And by Cauchy estimate we have
\begin{equation*}
\begin{aligned}
\|D\varphi\|_{r-\delta}&\leq 2\delta^{-1}\|\varphi\|_{r-\frac{\delta}{2}}\leq 2^{\vartheta+n+1}C_2 \cdot\delta^{-(\vartheta+n+1)}\|g\|_r ,\\
\|D\varphi\circ \Lambda^{\epsilon}\|_{r-\delta}&\leq \|D(\varphi\circ \Lambda^{\epsilon})\|_{r-\delta}|(\Lambda^{\epsilon})^{-1}|
\leq 2\delta^{-1}|(\Lambda^{\epsilon})^{-1}|\|\varphi\circ \Lambda^{\epsilon}\|_{r-\frac{\delta}{2}}\\
&\leq |(\Lambda^{\epsilon})^{-1}|(|\Lambda^{\epsilon}|C_2+1)2^{\vartheta+n+1}\cdot\delta^{-(\vartheta+n+1)}\|g\|_r,
\end{aligned}
\end{equation*}
where $C_2$ comes from the estimate (\ref{Est_L}) which depends only on $\kappa,\sigma,\Lambda,n,C_0$. Thus we can choose a suitable constant $C_3$ which is much greater than the coefficients above, which concludes the proof.
\end{proof}

\subsection{Idea}
We consider the linearization problem with $\Phi=Id+\phi_{\geq 2}$, where $\phi_{\geq 2}\in {\mathcal{H}(B_r)}^n$. We try to solve the equation $\mathcal{F}(\Phi)=0$, where
\begin{equation*}
\mathcal{F}(\Phi):=F\circ \Phi-\Phi\circ\Lambda^{\epsilon}.
\end{equation*}
Since $\mathcal{F}(Id)=f$, which is small near $0$ , we are dealing with a perturbation problem. Since $f$ contains only terms of order $\geq 2$, then by eventually conjugating by an homothety, we can assume without loss of generality, that $f$ is holomorphic on $|z|<1$, and that
\begin{equation}\label{perturf}
\|f\|_1< \delta_0,
\end{equation}
for $\delta_0$ as small as we want, to be chosen later on. 
Assuming $\mathcal{F}(\Phi)$ to be small, we are looking for a better approximation $\Phi+v$, which makes $\mathcal{F}(\Phi+v)$ smaller. By Taylor expansion on Banach space we have~:
\begin{equation*}
\mathcal{F}(\Phi+v) = \mathcal{F}(\Phi)+\mathcal{F}'(\Phi)v+\mathrm{R}(\Phi,v),
\end{equation*}
where
\begin{equation}\label{Fuv}
\mathcal{F}'(\Phi)v:=\frac{d}{dt}\mathcal{F}(\Phi+tv)\left|_{t=0}\right.= DF\circ \Phi\cdot v-v\circ \Lambda^{\epsilon},
\end{equation}
and the high order term $\mathrm{R}(\Phi,v)$ is given by
\begin{equation}\label{remainder}
\mathrm{R}(\Phi,v)=\int_{0}^{1}(1-t)\frac{d^2}{dt^2}\mathcal{F}(\Phi+tv)\cdot v^2dt=\int_{0}^{1}(1-t)\frac{d^2}{dt^2}f(\Phi+tv)\cdot v^2dt.
\end{equation}
We would have to solve $\mathcal{F}(\Phi)+\mathcal{F}'(\Phi)v=0$ such that $\mathcal{F}(\Phi+v)=O_2(\mathcal{F}(\Phi))$. Unfortunately, because of small divisors the linear operator $\mathcal{F}'(\Phi)$ given by (\ref{Fuv}) has no right-inverse on the space of holomorphic map on a fixed domain. We need to construct a sufficiently good approximating right-inverse of $\mathcal{F}'(\Phi)$. Following R\"ussmann \cite{Ru72} we have 
\begin{equation}\label{CRFu}
D\mathcal{F}(\Phi)(z)=DF\circ \Phi(z)\cdot D\Phi(z)- D\Phi\circ\Lambda^{\epsilon}(z)\cdot \Lambda^{\epsilon}.
\end{equation}
Let us set :
\begin{equation*}
v:= D\Phi\cdot\varphi.
\end{equation*}
Combining (\ref{Fuv}) and (\ref{CRFu}) we obtain
\begin{equation*}
\mathcal{F}'(\Phi)v=D\mathcal{F}(\Phi)\cdot\varphi+D\Phi\circ\Lambda^{\epsilon}(\Lambda^{\epsilon}\varphi-\varphi\circ\Lambda^{\epsilon}).
\end{equation*}
Consequently
\begin{equation}\label{FuvRdF}
\mathcal{F}(\Phi+v)=\mathcal{F}(\Phi)+D\Phi\circ \Lambda^{\epsilon}(\Lambda^{\epsilon}\varphi-\varphi\circ\Lambda^{\epsilon})+\mathrm{R}(\Phi,v)+D\mathcal{F}(\Phi)\cdot\varphi.
\end{equation}
By Lemma \ref{EofHomo}, 
we are able to solve the equation
$\mathcal{F}(\Phi)+D\Phi\circ \Lambda^{\epsilon}(\Lambda^{\epsilon}\varphi-\varphi\circ\Lambda^{\epsilon})=0$ in case that $D\Phi\circ\Lambda^{\epsilon}$ is invertible. According to (\ref{FuvRdF}) we still have $\mathcal{F}(\Phi+v)=O_2(\mathcal{F}(\Phi))$.

\subsection{Set up}
With the linear operator defined in $(\ref{Lphi_g})$, we shall define inductively the iteration $\Phi_{\nu}$, for $\nu=0,1,2,\cdots$ as follows:
$\Phi_0=Id$, and for $\nu\geq 0$, set
\begin{equation}\label{indfor}
\begin{aligned}
\Phi_{\nu+1}&=\Phi_{\nu}+v_{\nu} ,\\
v_{\nu} &= D\Phi_{\nu}\cdot\varphi_{\nu} ,\\
\varphi_{\nu}&=\mathcal{L}^{-1}((D\Phi_{\nu}\circ\Lambda^{\epsilon})^{-1}\mathcal{F}(\Phi_{\nu})).
\end{aligned}
\end{equation}
With this formula and (\ref{FuvRdF}) we then have :
\begin{equation}\label{indF}
\mathcal{F}(\Phi_{\nu+1})= D\mathcal{F}(\Phi_{\nu})\cdot\varphi_{\nu}+\mathrm{R}(\Phi_{\nu},v_{\nu}),
\end{equation}
which is $O_2(\mathcal{F}(\Phi_{\nu}))$.
The domains $B_{r_{\nu}}$ are defined with
\begin{equation*}
r_{\nu}=\frac{1}{2}(1+2^{-(\nu+1)}), \quad \nu\geq 0.
\end{equation*}
Clearly $\lim\limits_{\nu\rightarrow\infty}r_{\nu}=\frac{1}{2}$, and $B_{r_{\nu+1}}\subset B_{r_{\nu}}$ for all $\nu\geq 0$. The sequence of small numbers, $\varepsilon_{\nu}$, is defined by
\begin{equation*}
\varepsilon_{\nu+1}=C^{\nu+1}\varepsilon^2_{\nu}, \quad \nu\geq 0,
\end{equation*}
where $C$ is a large constant depending on $\kappa,\sigma, C_0,n,|\Lambda^{\epsilon}|,|(\Lambda^{\epsilon})^{-1}|$ which will be determined later on. For
$\varepsilon_0$ sufficiently small, the sequence $\varepsilon_{\nu}$ tends rapidly to $0$, actually for all $\nu\geq 0$, (refer to the explicit calculation in\cite{dela99}) we have:
\begin{equation}\label{epsiloneq}
\varepsilon_{\nu}=C^{-(\nu+2)}(C^2\varepsilon_0)^{2^{\nu}}.
\end{equation}
In particularly we then have
\begin{equation}\label{epsilonin}
\varepsilon_{\nu+1}\leq \frac{1}{2}\varepsilon_{\nu}\leq\varepsilon_{\nu}-\varepsilon_{\nu+1}.
\end{equation}
With all of these preparations above, we can get the induction lemma.

\subsection{Induction}
We shall prove that if $\varepsilon_0$ is sufficiently small (that is to say $C$ is sufficiently large), then the following inductive lemma holds true for all $\nu\geq 0$, defined by (\ref{indfor}) inductively. It follows verbatim from E.Zehnder\cite{Zeh77}:
\begin{Lemma}\cite{Zeh77}
For all $\nu=0,1,2,\cdots$. we have :\\
$(1.\nu)$  $\Phi_{\nu}$ is holomorphic on $D_{r_{\nu}}, \Phi_{\nu}(0)=0, D\Phi_{\nu}(0)=1$, and
\begin{equation*}
\|\Phi_{\nu}-id\|_{C^1,r_{\nu}}\leq \varepsilon_0-\varepsilon_{\nu}.
\end{equation*}
$(2.\nu)$  $\mathcal{F}(\Phi_{\nu})$ is holomorphic on $B_{r_{\nu}}, $ and
\begin{equation*}
\|\mathcal{F}(\Phi_{\nu})\|_{r_{\nu}}\leq \varepsilon^2_{\nu}.
\end{equation*}
$(3.\nu)$  $v_{\nu}$ is holomorphic on $D_{r_{\nu+1}}, v_{\nu}(0)=0, Dv_{\nu}(0)=0$, and
\begin{equation*}\label{}
\|v_{\nu}\|_{C^1,r_{\nu+1}}\leq \varepsilon_{\nu+1}\leq \varepsilon_{\nu}-\varepsilon_{\nu+1}.
\end{equation*}
\end{Lemma}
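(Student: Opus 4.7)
The plan is to establish the three assertions simultaneously by induction on $\nu$, following the classical Newton scheme of E.~Zehnder. At the base step $\nu=0$ the map $\Phi_0=id$ satisfies $(1.0)$ trivially, and $\mathcal{F}(\Phi_0)=F\circ id-id\circ\Lambda^{\epsilon}=f$, so $(2.0)$ follows from the normalization $\|f\|_1<\delta_0$ upon choosing $\delta_0:=\varepsilon_0^{2}$ and using $r_0<1$.

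For the inductive step, assume $(1.\nu)$ and $(2.\nu)$. The main analytic input is Lemma~\ref{EofHomo}, applied with the loss $\delta_\nu:=r_\nu-r_{\nu+1}=2^{-(\nu+3)}$. From $(1.\nu)$ one has $\|D\Phi_\nu-I\|_{r_\nu}\leq\varepsilon_0$, so if $\varepsilon_0$ is small, the matrix $D\Phi_\nu\circ\Lambda^{\epsilon}$ is invertible on $B_{r_\nu}$ with inverse bounded by $2$. Set $g_\nu:=(D\Phi_\nu\circ\Lambda^{\epsilon})^{-1}\mathcal{F}(\Phi_\nu)$: the identities $\mathcal{F}(\Phi_\nu)(0)=0$ and $D\mathcal{F}(\Phi_\nu)(0)=\Lambda^{\epsilon}-\Lambda^{\epsilon}=0$ ensure that $g_\nu\in\mathcal{H}(B_{r_\nu})^{n}$ with $\|g_\nu\|_{r_\nu}\leq 2\varepsilon_\nu^{2}$. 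Lemma~\ref{EofHomo} then yields $\varphi_\nu\in\mathcal{H}(D_{r_{\nu+1}})^{n}$ with
\[
\|\varphi_\nu\|_{C^{1},r_{\nu+1}}\leq 2\,C_{3}\,\delta_\nu^{-\tau}\,\varepsilon_\nu^{2},
\]
and a Leibniz estimate for $v_\nu=D\Phi_\nu\cdot\varphi_\nu$ gives $\|v_\nu\|_{C^{1},r_{\nu+1}}\leq C_{4}\cdot 2^{(\nu+3)\tau}\,\varepsilon_\nu^{2}$. The conditions $v_\nu(0)=0,\,Dv_\nu(0)=0$ follow from $\varphi_\nu\in\mathcal{H}$ and $D\Phi_\nu(0)=I$. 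Choosing the constant $C$ in $\varepsilon_{\nu+1}=C^{\nu+1}\varepsilon_\nu^{2}$ once and for all large enough that $C^{\nu+1}\geq C_{4}\cdot 2^{(\nu+3)\tau}$ for every $\nu\geq 0$, which is possible as soon as $C\geq\max(2^{\tau},\,C_{4}\cdot 2^{3\tau})$, yields $\|v_\nu\|_{C^{1},r_{\nu+1}}\leq\varepsilon_{\nu+1}$; combined with~(\ref{epsilonin}) this gives $(3.\nu)$.

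Statement $(1.\nu+1)$ is then immediate from $\Phi_{\nu+1}=\Phi_\nu+v_\nu$ and the telescoping $\|\Phi_{\nu+1}-id\|_{C^{1},r_{\nu+1}}\leq(\varepsilon_0-\varepsilon_\nu)+(\varepsilon_\nu-\varepsilon_{\nu+1})=\varepsilon_0-\varepsilon_{\nu+1}$. For $(2.\nu+1)$ I would use the Newton identity~(\ref{indF}), $\mathcal{F}(\Phi_{\nu+1})=D\mathcal{F}(\Phi_\nu)\cdot\varphi_\nu+\mathrm{R}(\Phi_\nu,v_\nu)$. A Cauchy estimate on $\mathcal{F}(\Phi_\nu)$ from $B_{r_\nu}$ down to $B_{r_{\nu+1}}$ gives $\|D\mathcal{F}(\Phi_\nu)\|_{r_{\nu+1}}\leq C_{5}\,\delta_\nu^{-1}\,\varepsilon_\nu^{2}$, and the integral formula~(\ref{remainder}), together with a Cauchy estimate for $D^{2}f$ on a ball slightly larger than $B_{r_{\nu+1}}$, yields $\|\mathrm{R}(\Phi_\nu,v_\nu)\|_{r_{\nu+1}}\leq C_{6}\,\|v_\nu\|_{r_{\nu+1}}^{2}$. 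Multiplying out and using the bounds already obtained for $\varphi_\nu$ and $v_\nu$ produces $\|\mathcal{F}(\Phi_{\nu+1})\|_{r_{\nu+1}}\leq C_{7}\cdot 2^{(\nu+3)(2\tau+1)}\,\varepsilon_\nu^{4}$, and enlarging $C$ further so that $C^{2(\nu+1)}\geq C_{7}\cdot 2^{(\nu+3)(2\tau+1)}$ for all $\nu$ closes the induction on $(2.\nu+1)$.

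The genuinely analytic work has been absorbed into Lemma~\ref{EofHomo}, which encapsulates both the small-divisor control on the Siegel slice and the uniform estimate on the Poincar\'e slice. The induction itself is essentially bookkeeping; the delicate point is the once-and-for-all uniform choice of the constant $C$ (and correspondingly of $\varepsilon_0$) that simultaneously absorbs the polynomial losses $\delta_\nu^{-\tau}$ and $\delta_\nu^{-(2\tau+1)}$ against the quadratic gain $\varepsilon_\nu\mapsto\varepsilon_\nu^{2}$. Once this is arranged, the super-exponential decay~(\ref{epsiloneq}) of $(\varepsilon_\nu)$ is what ultimately makes all three inductive bounds propagate uniformly in $\nu$.
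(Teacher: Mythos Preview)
Your proposal is correct and follows exactly the classical Newton--Zehnder scheme that the paper invokes; indeed, the paper does not supply its own proof of this lemma but simply states that ``it follows verbatim from E.~Zehnder \cite{Zeh77}'', so you have in fact written out the details that the authors omit. The key analytic ingredient---Lemma~\ref{EofHomo}---is applied precisely as intended, and your bookkeeping of the constants (choosing $C$ large enough to absorb the geometric losses $2^{(\nu+3)\tau}$ and $2^{(\nu+3)(2\tau+1)}$) is the standard way to close the induction.
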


From $(3.\nu), \nu=0,1,2,\cdots$ we can conclude, by (\ref{indfor}) $v_{\nu}=\Phi_{\nu+1}-\Phi_{\nu}$ that
$\Phi(z):= \lim\limits_{\nu \rightarrow \infty}= Id+\sum\limits_{k=0}^{\nu-1}v_k=\lim\limits_{\nu\rightarrow \infty}\Phi_{\nu}$ converges uniformly for $z\in B_{\frac{1}{2}}$. Hence $\Phi$ is a holomorphic map defined on $B_{\frac{1}{2}}$. From $(1.\nu)$ we have $\Phi(0)=0, D\Phi(0)=1$. As a consequence of $(2.\nu)$ we have on $B_{\frac{1}{2}}, \mathcal{F}(\Phi)=\lim\limits_{\nu\rightarrow\infty}\mathcal{F}(\Phi_{\nu})=0,$ which proves our main Theorem \ref{mainth1}.

\normalem
\bibliographystyle{alpha}

\end{document}